\newtheorem{theorem}{Theorem}
\newtheorem{prop}[theorem]{Proposition}
\newtheorem{lemma}[theorem]{Lemma}
\theoremstyle{definition}
\newtheorem{defn}{Definition}
\newtheorem{remark}{Remark}
\newtheorem{corollary}{Corollary}
\newcommand{\F}{{F}}
\newcommand{\PP}{\mathbb P}
\newcommand{\R}{\mathbb R}
\newcommand{\C}{\mathbb C}
\newcommand{\CP}{\C\PP}
\newcommand{\CH}{\C{\mathrm H}}
\newcommand{\Lie}{{\mathcal L}}
\newcommand{\mean}{{\mathbf m}}
\def\({\left(}
\def\){\right)}
\def\<{\left <}
\def\>{\right >}
\def\a {\alpha}
\def\b {\beta}
\def\l {\lambda}
\newcommand{\I}{{\mathcal I}}
\newcommand{\V}{\mathcal V}
\renewcommand{\H}{\mathcal H}
\newcommand{\w}{\omega}
\newcommand{\e}{\mathbf e}
\newcommand{\bv}{\mathbf v}
\newcommand{\bw}{\mathbf w}
\newcommand{\JJ}{{\mathrm J}} 
\newcommand{\RR}{{\sf R}} 
\newcommand{\restr}{\negthickspace \mid}
\newcommand{\nat}{\widetilde\nabla}
\newcommand{\tr}{\operatorname{trace}\,}
\newcommand{\RRt}{\widetilde \RR}
\newcommand{\mt}{\widetilde M}
\newcommand{\Mt}{\widetilde M}
\def\intprod{\mathbin{\raisebox{.4ex}{\hbox{\vrule height .5pt width
5pt depth 0pt %
         \vrule height 3pt width .5pt depth 0pt}}}}
\def\&{\wedge}
\newcommand{\tmod}{\ \operatorname{mod}\, }
\def\aside#1{\noindent{\tt {#1}}\par\noindent}
\begin{document}
\title{The *-Ricci tensor for hypersurfaces in $\CP^n$ and $\CH^n$}
\author{Thomas A. Ivey and Patrick J. Ryan}
\date{April 2010}
\maketitle
\begin{abstract}
We update and refine the work of T. Hamada concerning *-Einstein hypersurfaces in $\CP^n$ and $\CH^n$.
We also address existence questions using the methods of moving frames and exterior differential systems.
\end{abstract}

\section{Introduction}

The notion of *-Ricci tensor for an almost-Hermitian manifold was introduced by Tachibana \cite{tachibana}
in 1959 and later used (along with the related concept of *-Einstein) in work on the Goldberg conjecture (see,
for example, Oguro and Sekigawa \cite{oguro}).  These ideas also apply naturally to contact metric manifolds, and
in particular, to hypersurfaces in complex space forms, where they were introduced by T. Hamada \cite{hamadaTokyo}.
In this paper, we refine, clarify, and extend some of Hamada's work, specifically the classification of
*-Einstein hypersurfaces in complex space forms.  See, in particular, Theorem \ref{starEinsteinClassification}.

Takagi \cite{takagi1975}, for $\CP^n$, and
Montiel \cite{montiel}, for $\CH^n$,
catalogued a specific list of real hypersurfaces, which we call ``Takagi's list" and ``Montiel's list" in
\cite{nrsurvey}.  These are the homogeneous Hopf hypersurfaces.  They have constant principal curvatures and
every Hopf hypersurface with constant principal curvatures is an open subset of one of them.

Many theorems have been published characterizing
these lists or subsets of them.  For example, the pseudo-Einstein hypersurfaces, introduced by Kon \cite{kon},
form such a subset.  The same subset is characterized as the the set of Hopf hypersurfaces
satisfying a certain condition on the Ricci tensor (known as {\it pseudo-Ryan} in the literature).  This has been known
for $n \ge 3$ since 1990 (see Theorems 6.1, 6.2, and 6.30 of \cite{nrsurvey}).  In Theorem \ref{pseudoRyanEinstein},
we prove this result for $n=2$.  We also prove that the *-Einstein and pseudo-Ryan conditions are equivalent for Hopf
hypersurfaces when $n=2$, thus giving us three distinct characterizations of this class of hypersurfaces.

It would be of interest to find additional classes of hypersurfaces, that could
be ``nicely" characterized, but this seems to be a difficult problem.  In this paper, we establish the existence
of a family of non-Hopf pseudo-Ryan hypersurfaces in $\CP^2$ and $\CH^2$, and prove that (in contrast to the Hopf case),
the set of non-Hopf pseudo-Ryan hypersurfaces is disjoint from the set of non-Hopf *-Einstein hypersurfaces;
see Theorem \ref{nonempty} and Corollary \ref{pseudoRexist}.  We hope
that this result will lead to further refinements of these conditions that can be characterized geometrically.

In \S\ref{constancy}, we construct a family of Hopf hypersurfaces that are not *-Einstein, but satisfy
a weakened form of the *-Einstein condition.  These examples show that the constancy of the *-scalar curvature is an essential
assumption in the definition of the *-Einstein condition, unlike the situation in the definition of ``ordinary"
Einstein manifold.  Finally, as a further application of our methods, in \S\ref{constantpc} we provide a new construction
for the non-Hopf hypersurfaces in $\CH^2$ with constant principal curvatures which were classified
by Berndt and Diaz-Ramos \cite{BerndtDiaz}.

In what follows, all manifolds are assumed connected and all
manifolds and maps are assumed smooth $(C^\infty)$ unless stated otherwise.  Basic
notation and historical information for hypersurfaces in complex space forms
may be found in \cite{nrsurvey}.  For more on
moving frames and exterior differential systems, see the monograph
\cite{BCG3} or the textbook \cite{cfb}.

\subsection{Complex space forms and the *-Ricci tensor}

Throughout this paper, we will take the holomorphic sectional curvature of the complex
space form in question to be $4c$.  The curvature operator $\RRt$ of the
space form satisfies
\begin{equation}\label{ambientcurvature}
\RRt(X,Y) = c (X \wedge Y + \JJ X \wedge \JJ Y + 2\langle X, \JJ Y\rangle \JJ)
\end{equation}
for tangent vectors $X$ and $Y$ (cf. Theorem 1.1 in \cite{nrsurvey}),
where $X\wedge Y$ denotes the skew-adjoint operator defined by
$$(X \wedge Y) Z = \langle Y, Z\rangle X - \langle X, Z\rangle Y.$$
We will denote by $r$ the positive number such
that $c = \pm 1/r^2$.  This is the same convention as used in (\cite{nrsurvey}, p. 237).

A real hypersurface $M$ in $\CP^n$ or $\CH^n$ inherits two
structures from the ambient space.  First, given a unit normal
$\xi$, the {\em structure vector field} $W$ on $M$ is defined so
that
$$\JJ W = \xi,$$
where $\JJ$ is the complex structure.
This gives an orthogonal splitting of the tangent space as
$$ \operatorname{span} \{W\} \oplus W^\perp.$$
Second, we define on $M$ the skew-symmetric
 $(1, 1)$ tensor field $\varphi$ which is the complex
structure $\JJ$ followed by projection, so that
$$\varphi X = \JJ X - \langle X,W\rangle \xi.$$

Recall that the type (1,1) Ricci tensor of any Riemannian manifold is defined by the equation
\begin{equation}
\<SX, Y\> = \text{trace\ } \ \{Z \mapsto R(Z, X) Y\}
\end{equation}
where $X$, $Y$, and $Z$ are any tangent vectors and $R$ is the curvature tensor.  In case of a K\"ahler manifold, it is not difficult
to show that
\begin{equation}
\<SX, Y\> = \tfrac{1}{2} (\text{trace\ } \{\JJ \circ R(X, \JJ Y)\}).
\end{equation}
(see \cite {KN}, p. 149).  This led Tachibana and others to consider, on any almost-Hermitian manifold,
the *-Ricci tensor $S^*$, which may be defined by the same formula,
\begin{equation}
\<S^* X, Y\> = \tfrac{1}{2} (\text{trace\ } \{\JJ \circ R(X, \JJ Y)\})
\end{equation}
and to define a space to be *-Einstein if $\<S^* X, Y\>$ is a constant multiple of $\< X, Y\>$ for all
tangent vector fields $X$ and $Y$.

\section{Basic equations for hypersurfaces}
\label{basic}

In this and subsequent sections, we follow the notation and terminology of \cite{nrsurvey}:
$M^{2n-1}$ will be a hypersurface
in a complex space form $\mt$ (either $\CP^n$ or $\CH^n$) having constant holomorphic sectional curvature $4c\ne 0$.
The structures $\xi$, $W$, and $\varphi$ are
as defined in the Introduction. The $(2n-2)$-dimensional
distribution $W^\perp$ is called the {\it holomorphic distribution}. The operator $\varphi$ annihilates
$W$ and acts as complex structure on $W^\perp$. The shape
operator $A$ is defined by
$$A X = -\nat_X \xi$$
where $\nat$ is the Levi-Civita connection of the ambient space.
The Gauss equation expresses the curvature operator of $M$ in terms
of $A$ and $\varphi$, as follows:
\begin{equation}\label{gausseq}
\RR (X,Y)=AX\& AY+c\(X\& Y+\varphi X\& \varphi Y +2 \<X,\varphi Y\>
\varphi\),
\end{equation}
and from this we see that the
Ricci tensor is given by
\begin{equation}\label{Ricci}
SX=(2n+1)cX-3c\langle X,W\rangle W+ \mean AX-A^2 X,
\end{equation}
 where $\mean = {\text{trace}}\ A$.
In addition, it is easy to show (see \cite{nrsurvey}, p. 239) that
\begin{equation} \label{nablaW}
\nabla_X W=\varphi A X,
\end{equation}
where $\nabla$ is the Levi-Civita connection of the hypersurface $M$.

Following Hamada \cite{hamadaTokyo}, we define the *-Ricci tensor $S^*$ on $M$ by
\begin{equation}
\<S^* X, Y\> = \tfrac{1}{2} (\text{trace\ } \{\varphi \circ \RR(X, \varphi Y)\}),
\end{equation}
and the *-scalar curvature $\rho^*$ to be the trace of $S^*$.  We say that the hypersurface $M$ is *-Einstein if
$\rho^*$ is constant and
\begin{equation} \label{starEinsteinCondition}
\<S^* X, Y\> = \frac{\rho^*}{2(n-1)} \< X, Y\>
\end{equation}
for all $X$ and $Y$ in the holomorphic distribution $W^{\perp}$.

We define
the function
$$\alpha = \langle A\,W, W\rangle.
$$
The hypersurface  is said to be Hopf if the structure vector $W$ is a principal vector,
i.e. $AW = \alpha W$, and we refer to
$\alpha$ as the {\it Hopf principal curvature}.  It is important to recall that the Hopf
principal curvature is constant (see Theorem 2.1 in \cite{nrsurvey}).
Of course, $\alpha$ need not be constant for a non-Hopf hypersurface.

We also recall the notion of {\em pseudo-Einstein hypersurface}.
A real hypersurface $M$ in $\CP^n$ or $\CH^n$ is said to be
pseudo-Einstein if there are constants $\rho$ and $\sigma$ such that
$$S X = \rho X + \sigma \langle X,W\rangle W$$
for all tangent vectors $X$.

\subsection{*-Einstein hypersurfaces in the Takagi and Montiel lists}

We first note which hypersurfaces in the Takagi and Montiel lists are *-Einstein.
According to the standard terminology (see, for example \cite{nrsurvey}, pp.254--262),
the lists are broken down into ``types" A1, A2, A0, B, C, D, and E.
The situation is as follows:


\begin{theorem} \label{TakagiList}
Among the  homogeneous Hopf hypersurfaces $M^{2n-1}$ in $\CP^n$ and $\CH^n$, where $n \ge 2$
(i.e. Takagi's and Montiel's lists),
\begin{itemize}
\item All type A1, A0 and B hypersurfaces are *-Einstein,
\item A type A2 hypersurface is *-Einstein if an only if it is
a tube of radius $\frac{\pi}{4}r$ over $\CP^k$ where $1 \le k \le
n-2$,
\item No type C, D, or E hypersurface is *-Einstein.
\end{itemize}
\end{theorem}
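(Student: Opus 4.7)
The plan is to derive a simple formula for $S^*$ restricted to the holomorphic distribution $W^\perp$ of a Hopf hypersurface, and then read off the *-Einstein condition as a constraint on products of principal curvatures. Starting from the definition and the Gauss equation \eqref{gausseq}, I would substitute $\varphi Y$ for $Y$, use $\varphi^2 Y = -Y$ on $W^\perp$, and compute the trace via the two elementary identities $\tr(\varphi \circ (U \wedge V)) = 2\langle \varphi U, V\rangle$ and $\tr(\varphi^2) = -2(n-1)$. After collecting terms, everything collapses to
\begin{equation*}
\langle S^* X, Y\rangle = \langle \varphi A X, A \varphi Y\rangle + 2nc\langle X, Y\rangle, \qquad X, Y \in W^\perp,
\end{equation*}
which holds on any real hypersurface in $\Mt$.

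On a Hopf hypersurface the Codazzi equations guarantee that the principal eigenspaces in $W^\perp$ come paired under $\varphi$: for any principal direction $e_i$ with $A e_i = \lambda_i e_i$, the vector $\varphi e_i$ is again principal, with $A\varphi e_i = \mu_i \varphi e_i$ and $(\lambda_i, \mu_i)$ satisfying $2\lambda_i \mu_i = \alpha(\lambda_i + \mu_i) + 2c$. Choosing an orthonormal principal basis $\{e_1, \ldots, e_{2n-2}\}$ of $W^\perp$ adapted to this pairing, the formula specializes to $\langle S^* e_i, e_j\rangle = (\lambda_i \mu_i + 2nc)\delta_{ij}$, so $S^*$ restricted to $W^\perp$ is already diagonal with eigenvalues $\lambda_i \mu_i + 2nc$. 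Since principal curvatures are constant on all Takagi/Montiel hypersurfaces, $\rho^*$ is automatically constant, and the *-Einstein condition reduces to the purely algebraic requirement that $\lambda_i \mu_i$ take a single common value across the whole holomorphic distribution.

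With this reduction the three bullet points fall out by reading off the tabulated principal curvatures in \cite{nrsurvey}. Types A1 and A0 carry a single principal curvature $\lambda$ on $W^\perp$ with a $\varphi$-invariant eigenspace, giving $\mu_i = \lambda_i = \lambda$ and common product $\lambda^2$. In type B, $\varphi$ exchanges the two eigenspaces of the principal curvatures $\lambda, \mu$, yielding the single product $\lambda\mu$. Type A2 has two distinct principal curvatures $\lambda, \nu$ on $W^\perp$ with $\varphi$-invariant eigenspaces, so the products are $\lambda^2$ and $\nu^2$; equality forces $\nu = -\lambda$ (and hence $\alpha = 0$ via the Codazzi pairing). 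Substituting the known formulas for the tube of radius $u$ over $\CP^k$ in $\CP^n$, namely $\lambda = r^{-1}\cot(u/r)$ and $\nu = -r^{-1}\tan(u/r)$, isolates $u = \pi r/4$, while the corresponding relation $\coth(u/r) = -\tanh(u/r)$ for tubes over $\CH^k$ in $\CH^n$ admits no positive solution. Finally, types C, D and E carry at least two distinct $\varphi$-pairs $(\lambda, \mu)$ and $(\lambda', \mu')$ on $W^\perp$, and direct substitution of the tabulated curvatures shows $\lambda\mu \ne \lambda'\mu'$. The main obstacle is this last step: it is essentially a bookkeeping exercise but demands care in identifying the $\varphi$-pairing of eigenspaces for each of C, D, and E.
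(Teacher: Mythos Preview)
Your reduction of the *-Einstein condition to the equality of all products $\lambda_i\mu_i$ on $W^\perp$ is exactly the paper's approach, and your case analysis for types A0, A1, A2, and B matches theirs. For types C, D, and E the paper avoids the bookkeeping you anticipate: it observes that two of the principal curvatures (call them $\lambda_1,\lambda_3$) have $\varphi$-invariant eigenspaces and are therefore the distinct roots of $\lambda^2=\alpha\lambda+c$, so $\lambda_1^2=\lambda_3^2$ would force $\lambda_1+\lambda_3=\alpha=0$, which is ruled out for these types since $u=\pi r/4$ makes one of the tabulated curvatures undefined.
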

In other words, geodesic spheres in $\CP^n$, geodesic spheres,
horospheres, and tubes over $\CH^{n-1}$ in $\CH^n$ are *-Einstein, but except
for that, there is just one special case.  Note also, that the same classification holds locally. In
other words, an open subset of a hypersurface $M$ in the Takagi/Montiel lists is *-Einstein if and
only if $M$ is.

Theorem \ref{TakagiList} can be proved in a routine manner once we collect and verify a few facts.
We will do this at the end of Section  \ref{HopfStarEinstein}.

\subsection{Computation of the *-Ricci tensor}

In this section, we derive an expression for the *-Ricci tensor of a hypersurface and discuss the implications
for Hopf hypersurfaces.

\begin{theorem} \label{StarRicciTheorem}
For a real hypersurface $M^{2n-1}$ in $\CP^n$ or $\CH^n$, where $n \ge 2$,

\begin{equation}\label{Sstarform}
S^* = - (2nc\varphi^2 + (\varphi A)^2).
\end{equation}
Furthermore,
\begin{itemize}
\item If $M$ is Hopf, then $S^*$ is symmetric and $S^*W = 0$.
\item If $M$ is Hopf and $\a = 0$, then $S^*X = (2n+1)c X$ for all $X\in W^\perp$, and $\rho^* = 2(n-1)(2n+1)c$.
In particular, $M$ is *-Einstein.
\end{itemize}
\end{theorem}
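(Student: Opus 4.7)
The plan is to compute the trace defining $S^*$ directly by substituting the Gauss equation (\ref{gausseq}). Writing $2\<S^* X, Y\> = \operatorname{trace}\bigl(\varphi\circ \RR(X,\varphi Y)\bigr)$ and evaluating against an orthonormal frame $\{e_i\}$, each bilinear piece collapses via the identity $\sum_i\<u,e_i\>\<v,e_i\>=\<u,v\>$. The five terms arising from the ambient-curvature part of (\ref{gausseq}) simplify using $\varphi^2=-I+W\otimes W$ (hence $\operatorname{trace}(\varphi^2)=-2(n-1)$), $\varphi^3=-\varphi$, and the skew-symmetry of $\varphi$; after cancellation they contribute $-4nc\<\varphi^2 X, Y\>$. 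The shape-operator piece $AX\wedge A\varphi Y$ contributes $-2\<AX,\varphi A\varphi Y\>$, which the symmetry of $A$ and skew-symmetry of $\varphi$ rewrite as $-2\<(\varphi A)^2 X, Y\>$. Dividing by $2$ gives (\ref{Sstarform}).

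For the Hopf assertions, I would invoke the standard identity for Hopf hypersurfaces, $A\varphi A = \tfrac{\alpha}{2}(A\varphi + \varphi A) + c\varphi$, which is a consequence of Theorem 2.1 of \cite{nrsurvey}. Composing with $\varphi$ from the left and from the right, respectively, and subtracting yields $\varphi A\varphi A - A\varphi A\varphi = \tfrac{\alpha}{2}[\varphi^2, A]$. The assumption $AW=\alpha W$ forces $A$ to commute with $W\otimes W$, so $[\varphi^2, A]=0$ and $(\varphi A)^2$ is self-adjoint; since $\varphi^2$ is self-adjoint as well, (\ref{Sstarform}) shows that $S^*$ is symmetric. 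The identity $\varphi W=0$ then gives $S^* W = -2nc\varphi^2 W - \varphi A\varphi(\alpha W) = 0$.

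Setting $\alpha=0$ in the same identity reduces it to $A\varphi A = c\varphi$, so $(\varphi A)^2 = c\varphi^2$ and $S^* = -(2n+1)c\varphi^2$ by (\ref{Sstarform}). On $W^\perp$, where $\varphi^2=-I$, this reads $S^* X = (2n+1)cX$; on $\operatorname{span}\{W\}$ it vanishes. Hence $\rho^* = 2(n-1)(2n+1)c$, and $\rho^*/(2(n-1)) = (2n+1)c$ coincides with the eigenvalue of $S^*$ on $W^\perp$, verifying (\ref{starEinsteinCondition}) and making $M$ *-Einstein. The main technical obstacle is the bookkeeping in the trace calculation for (\ref{Sstarform}): the coefficient $2n$ emerges only after combining four identical $c$-contributions with one carrying the factor $\operatorname{trace}(\varphi^2) = -2(n-1)$, so sign errors in the $\varphi^2$ and $\varphi^3$ substitutions would be easy to make and hard to detect.
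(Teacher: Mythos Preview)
Your proposal is correct and follows essentially the same route as the paper. The only cosmetic differences are that the paper computes the traces via the observation that $\operatorname{trace}(v\mapsto \psi(v)u)=\psi(u)$ rather than expanding over an orthonormal frame, and in the $\alpha=0$ case it records $(\varphi A)^2 X=-cX$ on $W^\perp$ instead of writing $(\varphi A)^2=c\varphi^2$ globally; the substance of the argument, including the use of the identity $A\varphi A=\tfrac{\alpha}{2}(A\varphi+\varphi A)+c\varphi$ and the reduction of symmetry to $[\varphi^2,A]=0$, is identical.
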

\begin{proof}
We recall that for any linear functional $\psi$
on a finite-dimensional vector space, the trace of the map
$$
v \mapsto \psi(v) u
$$
is $\psi(u)$.
When we use the Gauss equation \eqref{gausseq} to compute $\RR(X,\varphi Y)\varphi Z$,
the first term is
\begin{equation}
(AX\& A\varphi Y) \varphi Z = \<A \varphi Y, \varphi Z\> A X - \<A X, \varphi Z\> A \varphi Y,
\end{equation}
so that
$$\tr (AX\& A\varphi Y)\circ \varphi =
\<A \varphi Y, \varphi A X\> - \<A X, \varphi A \varphi Y\> = -2 \<(\varphi A)^2 X, Y\>.
$$
Similarly, the other terms in the Gauss equation give
\begin{multline*}
(X\& \varphi Y+ \varphi X\& \varphi^2 Y +2 \<X,\varphi^2 Y\>\varphi ) \varphi Z = \\
\< \varphi Y, \varphi Z\>  X - \<X, \varphi Z\>  \varphi Y
+ \< \varphi^2 Y, \varphi Z\> \varphi X - \<\varphi X, \varphi Z\>  \varphi^2 Y
+ 2 \< X, \varphi^2 Y \> \varphi^2 Z
\end{multline*}
so that
\begin{multline*}
\tr
c(X\& \varphi Y+ \varphi X\& \varphi^2 Y +2 \<X,\varphi^2 Y\>\varphi )\circ \varphi =
\\
c(\< \varphi Y, \varphi X\> - \<X, \varphi^2 Y\>
+ \< \varphi^2 Y, \varphi^2 X\> - \<\varphi X, \varphi^3 Y\>  + 2 \< X, \varphi^2Y\> \text{\ trace\ }\varphi^2).
\end{multline*}
Noting that $\varphi^4 = -\varphi^2$ and $\operatorname{trace} \varphi^2 =-2(n-1)$, we find that
\begin{equation}
\<S^*X, Y\> = - \<(2nc\varphi^2 + (\varphi A)^2)X, Y\>.
\end{equation}

Now it is clear that $S^*$ is symmetric if and only if $(\varphi A)^2= (A \varphi)^2$.
In case $M$ is Hopf, we make use of the identity (\cite{nrsurvey} p. 245)
\newline
\begin{equation}\label{basicHopf}
A\varphi A = \frac{\a}2 (A\varphi +\varphi A) + c\varphi
\end{equation}
to reduce this condition to $ \frac{\a}2 (A\varphi^2)= \frac{\a}2 (\varphi^2 A)$.
Since $\operatorname{span} \{W\}$ and  $ W^\perp$ are $A$-invariant, we can use the fact that $\varphi^2$ is
zero on $W$ and acts as  $-I$ on $ W^\perp$ to verify that $A\varphi^2= \varphi^2 A$, and hence conclude
that $S^*$ is symmetric.

Finally, since $\varphi A W= 0$ for a Hopf hypersurface, we have $S^* W = 0$.  Further,
if $\a=0$, then applying $\varphi$ to \eqref{basicHopf} shows that $(\varphi A)^2 X = -c X$
for all $X \in W^\perp$. This yields the desired results for $S^*$ and $\rho^*$.
\end{proof}

\section{*-Einstein Hopf hypersurfaces}
\label{HopfStarEinstein}

In this section, we discuss the converse of Theorem \ref{TakagiList}.  Must every *-Einstein Hopf hypersurface occur in the
lists of Takagi and Montiel?  The answer is no, but almost.  Specifically, we have,
\begin{theorem}\label{nonzeroAlpha}
Let $M^{2n-1}$, where $n \ge 2$, be a *-Einstein Hopf hypersurface in $\CP^n$ or $\CH^n$ whose Hopf principal
curvature $\alpha$ is nonzero.
Then $M$ is an open subset of a hypersurface in the lists of Takagi and Montiel.
\end{theorem}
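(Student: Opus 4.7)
The strategy is to exploit Theorem \ref{StarRicciTheorem} together with the classification (recalled in the introduction) that every Hopf hypersurface with constant principal curvatures is an open subset of a hypersurface in the Takagi/Montiel list. Since $\alpha$ is automatically constant on any Hopf hypersurface, the task reduces to showing that the remaining principal curvatures on $W^\perp$ are constant.

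Pointwise, decompose $W^\perp$ into eigenspaces of $A$. If $X \in W^\perp$ satisfies $AX = \lambda X$, then applying \eqref{basicHopf} to $X$ shows that $\varphi X$ is also principal, with $A\varphi X = \mu \varphi X$, where
\begin{equation}\label{partnerrel}
\lambda \mu = \tfrac{\alpha}{2}(\lambda + \mu) + c.
\end{equation}
A short computation using $\varphi^2 X = -X$ and the eigenvector relations gives $(\varphi A)^2 X = -\lambda\mu\, X$, so Theorem \ref{StarRicciTheorem} yields
$$S^* X = (2nc + \lambda \mu)\, X.$$

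The *-Einstein hypothesis says $S^* X = \tfrac{\rho^*}{2(n-1)} X$ on $W^\perp$ with $\rho^*$ constant, so $\lambda\mu$ takes the same value for every principal direction on $W^\perp$. Because $\alpha \neq 0$, relation \eqref{partnerrel} then forces $\lambda + \mu$ to be a global constant as well. Consequently every principal curvature of $A|_{W^\perp}$ is a root of a single fixed quadratic $t^2 - st + p = 0$, and thus takes one of at most two values $\lambda_\pm$.

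If $\lambda_+ = \lambda_-$, then $A$ equals a constant scalar multiple of the identity on $W^\perp$. Otherwise, the $\lambda_\pm$-eigenspaces of $A|_{W^\perp}$ are the images of the smooth projectors $(A - \lambda_\mp I)/(\lambda_\pm - \lambda_\mp)$, so their ranks are locally constant, hence globally constant on the connected manifold $M$. Either way $M$ has constant principal curvatures, and the conclusion follows from the classification cited above. The only place the hypothesis $\alpha \neq 0$ is used---and the main obstacle in its absence---is in solving \eqref{partnerrel} for $\lambda + \mu$; without it only $\lambda\mu$ is controlled and the conclusion could fail.
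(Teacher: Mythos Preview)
Your strategy---use \eqref{basicHopf} to pair off eigenvalues on $W^\perp$, then from the constancy of $\lambda\mu$ and $\alpha\ne 0$ deduce the constancy of $\lambda+\mu$, so that all eigenvalues are roots of one fixed quadratic---is exactly the paper's approach.  There is, however, a genuine gap.  The assertion that ``applying \eqref{basicHopf} to $X$ shows that $\varphi X$ is also principal'' is not justified in general: applying \eqref{basicHopf} to an eigenvector $X$ with $AX=\lambda X$ gives
\[
\bigl(\lambda-\tfrac{\alpha}{2}\bigr)\,A\varphi X=\bigl(\tfrac{\lambda\alpha}{2}+c\bigr)\varphi X,
\]
so $\varphi X$ is an eigenvector only when $\lambda\ne\tfrac{\alpha}{2}$.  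If $\lambda=\tfrac{\alpha}{2}$ the identity collapses to $\alpha^2+4c=0$ and tells you nothing about $A\varphi X$.  Hence in $\CH^n$ with $\alpha^2+4c=0$ the value $\tfrac{\alpha}{2}$ can (and does) occur as a principal curvature, and your computation $(\varphi A)^2 X=-\lambda\mu\,X$ and the ensuing relation \eqref{partnerrel} are not available for those eigenvectors.  The paper handles this case separately: if some eigenvalue $\lambda\ne\tfrac{\alpha}{2}$ appears, then $\varphi X$ is forced into the $\tfrac{\alpha}{2}$-eigenspace, the *-Einstein condition gives $(\varphi A)^2 Y=-\lambda\tfrac{\alpha}{2}\,Y$ for \emph{all} $Y\in W^\perp$, and from this one recovers that $\varphi$ interchanges the two eigenspaces and that $\lambda$ is constant.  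You need an argument of this kind to close the gap.

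On the positive side, once the eigenvalues $\lambda_\pm$ are known to be global constants, your use of the smooth projectors $(A-\lambda_\mp I)/(\lambda_\pm-\lambda_\mp)$ and their integer-valued traces to conclude constant multiplicities is a tidy global argument; the paper instead works locally near a point where the number of distinct eigenvalues is maximal and then passes from local to global by an open-and-closed argument.
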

\begin{remark}
This corrects Theorems 1.1 and 1.2 of \cite{hamadaTokyo}, where the case $\alpha = 0$ was overlooked.  We will
show that all Hopf hypersurfaces with $\a = 0$ are *-Einstein.  In $\CP^n$, for instance, this includes every
hypersurface that is a tube of radius $\frac{\pi}{4}r$ over a complex submanifold.
Also, all pseudo-Einstein hypersurfaces in $\CP^2$ and $\CH^2$ are *-Einstein.  Many of these have non-constant
principal curvatures; see \cite{kimryan2} and \cite{IveyRyan}.
\end{remark}

We now prove Theorem \ref{nonzeroAlpha}.
\begin{proof}
For any unit principal vector  $X \in W^\perp$ with corresponding principal curvature $\l$,
it follows directly from \eqref{basicHopf} that
$(\l - \frac{\a}{2}) A \varphi X = (\frac{\l\a}{2} + c) \varphi X$.  If $\l \ne \frac{\a}{2}$, then
$\varphi X$  is also a principal
vector with corresponding principal curvature $\nu$ where
\newline
\begin{equation}\label{basicHopf1}
\l \nu = \frac {\l + \nu}{2} \a + c.
\end{equation}
We also note that $\frac{\a}{2}$ cannot be a principal curvature unless $\a^2 + 4c = 0$.

First look at the case where $\a^2 + 4c \ne 0$.
Pick a point  $p \in M$ where a maximal number of eigenvalues of $A$ (restricted
to $W^\perp$) are distinct.  This guarantees that the principal curvatures have constant multiplicities
in a neighborhood of $p$, and are therefore smooth.
Let $V \subseteq T_p M$ be a principal space corresponding to a principal curvature
$\l$.  Then $\varphi V$ is a principal space with corresponding principal curvature $\nu$ satisfying \eqref{basicHopf1}.
If $\operatorname{span} \{V, \varphi V\} = W^\perp$ at $p$, then $(\varphi A)^2 X = - \l \nu X$ for all $X \in W^\perp$.
Since $M$ is *-Einstein, $\l \nu$ must be constant near $p$.  Since $\a \ne 0$, we see from \eqref{basicHopf1}
that $\l + \nu$ is constant as well.  Thus $\l$ and $\nu$ are constant near $p$.  Note that this includes the case $\l = \nu$.
On the other hand, if $\operatorname{span} \{V, \varphi V\} \ne W^\perp$ at $p$,
we can construct (at least) two such such pairs
$\{\l, \nu\}$ and $\{\tilde\l, \tilde\nu\}$.  However, the *-Einstein condition guarantees that $(\varphi A)^2$
is a constant multiple of the identity on $W^\perp$, so that $\l \nu = \tilde \l \tilde \nu$, which leads to
$\l + \nu = \tilde \l + \tilde \nu$, and finally to $\{\l, \nu\} = \{\tilde\l, \tilde\nu\}$, which is a
contradiction.

We now consider the case where $\a^2 + 4c = 0$, and choose $p$ as above.  One possibility is that $AX = \frac{\a}{2}X$
for all $X \in W^\perp$.  If this does not hold, suppose that $\l \ne \frac{\a}{2}$ is a principal curvature at $p$, and that
$X$ is an associated principal vector.
Then $A \varphi X = \frac{\a}{2} \varphi X$ since (\ref{basicHopf1}) reduces to
\newline
\begin{equation} \label{basicHopf2}
(\l - \frac{\a}{2})(\nu - \frac{\a}{2}) = 0.
\end{equation}
Because $M$ is *-Einstein, $(\varphi A)^2 Y = -\l \frac{\a}{2} Y$ for all $Y \in W^\perp$ and $\l$ is constant
near $p$.  In particular, if
$AY = \frac{\a}{2}Y,$ this leads to $A \varphi Y = \l \varphi Y$.  Thus the principal
spaces of $\l$ and $\frac{\a}{2}$ have the same dimension and are interchanged by $\varphi$.
Because of (\ref{basicHopf2}), there can be no principal curvatures other than $\l$ and $\frac{\a}{2}$.  Again,
$M$ has constant principal curvatures near $p$.

In all cases, $p$ has a neighborhood with constant principal curvatures, which therefore must be an
open subset of some member of Takagi's or Montiel's lists.  For the case  $\a^2 + 4c = 0$
the only possibility that can actually occur is the horosphere and only $\a/2$ occurs as a principal
curvature on $W^\perp$.  Thus $M$ is an open subset of a horosphere.  For the case $\a^2 + 4c \ne 0$,
the set where the principal curvature data (value and multiplicity) agree with those at $p$, is open and
closed and therefore is all of $M$.  It follows that $M$ is an open subset of a specific member of one of these lists.

\end{proof}
As a consequence of Theorems \ref{StarRicciTheorem} and \ref{nonzeroAlpha}, Proposition 2.21 of \cite{kimryan2} and
Theorem 4 of \cite{IveyRyan2}, we have the following:
\begin{corollary}\label{StarRicciCorollary}
For a hypersurface $M^3$ in $\CP^2$ or $\CH^2$, the following are equivalent.
\begin{enumerate}
\item $M$ is Hopf and *-Einstein;
\item $M$ is pseudo-Einstein;
\item $\Lie_W R_W = 0$ where $R_W$ is the structure Jacobi operator of $M$ and $\Lie_W$ is the Lie derivative
in the direction of the structure vector $W$.
\end{enumerate}
\end{corollary}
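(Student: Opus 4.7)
My plan is to prove the three-way equivalence by combining the two theorems already established in this paper with the external results from \cite{kimryan2} and \cite{IveyRyan2}; the corollary is so labeled because no really new work is required once these pieces are in hand.

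For $(1) \Leftrightarrow (2)$, I would split the argument along the Hopf principal curvature $\alpha$. When $\alpha = 0$, Theorem \ref{StarRicciTheorem} already yields *-Einstein for free (with $S^* X = 5c X$ on $W^\perp$ for $n=2$); substituting $\alpha = 0$ and $\lambda\nu = c$ (the latter from \eqref{basicHopf1}) into the Ricci formula \eqref{Ricci} then gives $S X = 6c X$ on $W^\perp$ and $S W = 2c W$, exhibiting the pseudo-Einstein form with $\rho = 6c$ and $\sigma = -4c$. When $\alpha \neq 0$, Theorem \ref{nonzeroAlpha} restricts $M$ to Takagi's or Montiel's list, which for $n=2$ consists of types A1, A0, and B; Proposition 2.21 of \cite{kimryan2} then identifies which of these carry the pseudo-Einstein form. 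For the converse direction, the same proposition yields Hopf from the pseudo-Einstein hypothesis, after which formula \eqref{Sstarform} combined with \eqref{basicHopf1} forces $S^*$ to be a constant multiple of the identity on $W^\perp$, so $M$ is *-Einstein.

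For $(2) \Leftrightarrow (3)$, I would invoke Theorem 4 of \cite{IveyRyan2}. The structure Jacobi operator on a Hopf hypersurface takes the explicit form $R_W X = \alpha A X + c X - (\alpha^2 + c)\langle X, W\rangle W$, obtained directly by expanding $R(X,W)W$ using \eqref{gausseq}, and its Lie derivative along $W$ can be unwound using $\nabla W = \varphi A$ from \eqref{nablaW}. The cited theorem then matches the vanishing of $\Lie_W R_W$ to the pseudo-Einstein algebraic structure of $S$.

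The main obstacle is that the decisive work for both equivalences is packaged in the external references: Proposition 2.21 of \cite{kimryan2} encodes the delicate $n=2$ classification of pseudo-Einstein hypersurfaces, and Theorem 4 of \cite{IveyRyan2} encodes the Lie-derivative characterization. Once their statements are in place, the present proof amounts to a case split on $\alpha$ provided by Theorems \ref{StarRicciTheorem} and \ref{nonzeroAlpha}, together with the explicit algebraic identities \eqref{Sstarform} and \eqref{basicHopf1} for Hopf hypersurfaces.
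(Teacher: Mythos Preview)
Your proposal is correct and follows essentially the same route as the paper. The paper's entire proof of this corollary is the single sentence preceding its statement, which simply cites Theorems~\ref{StarRicciTheorem} and~\ref{nonzeroAlpha}, Proposition~2.21 of \cite{kimryan2}, and Theorem~4 of \cite{IveyRyan2}; you invoke exactly the same four ingredients, organized around the same case split on $\alpha$, and your explicit verification of the $\alpha=0$ case is a reasonable elaboration of what the paper leaves implicit.
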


\begin{remark}
The argument given in \cite{hamadaTokyo} for Theorem \ref{nonzeroAlpha} begins as ours does,
but leads to a quadratic equation with constant
coefficients, that all principal curvatures on $W^\perp$ must satisfy.  Unfortunately, when $\alpha=0$, all
coefficients vanish, so the proof is valid only when $\a \ne 0$ is assumed. We have included our alternative proof
in this paper because it establishes some facts that are useful for veryifying Theorem \ref{TakagiList},
to which we now turn our attention.
\end{remark}
\subsubsection*{Proof of Theorem \ref{TakagiList}}

As we have seen in the proof of Theorem \ref{nonzeroAlpha}, one can check the *-Einstein condition on a Hopf
hypersurface by examining the $\varphi$-invariant subspaces
of the form $\operatorname{span} \{V, \varphi V\}$, where $V\subseteq W^\perp$ is a principal subspace.
In the Takagi-Montiel lists, the type A hypersurfaces have $\varphi$-invariant principal spaces.  This
means that $(\varphi A)^2$, restricted to $W^\perp$, is a constant multiple
of the identity for type A1 and type A0 hypersurfaces,
so they must be *-Einstein.
For type A2 hypersurfaces, however, $W^\perp$ splits into two distinct $\varphi$-invariant principal subspaces,
whose corresponding principal curvatures ($\l_1$ and $\l_2$, say) satisfy the quadratic equation
$\l^2 = \a\l + c$.  In order to satisfy the *-Einstein condition, we would need to have $\l_1^2 = \l_2^2$,
which is impossible unless $\a = 0$.  For type A2 hypersurfaces in $\CP^n$, $\a = 0$ only when the radius is
$\frac{\pi}{4}r$, while for a type A2 hypersurface in $\CH^n$, $\a$ is nonzero for all radii.
(See Theorems 3.9 and 3.14 in \cite{nrsurvey}).

For type B hypersurfaces, $W^\perp = \operatorname{span} \{V, \varphi V\}$, where $V$ is a principal subspace
of dimension $(n-1)$.  The corresponding principal curvatures satisfy $\l_1 \l_2 + c = 0$, so that
$(\varphi A)^2 X = c X$ for all $X \in W^\perp$ and hence $M$ is *-Einstein with $\rho^* = 2(n-1)(2n-1)c$.

However, types C, D, and E hypersurfaces cannot be *-Einstein.  To see this, using the notation of \cite{nrsurvey}, p. 261,
we first note that $\a$ cannot be $0$ since $u = \frac{\pi}{4}$ would cause the principal curvature $\l_2$ to be undefined.
Further, principal curvatures $\l_1$ and $\l_3$ satisfy the quadratic equation $\l^2 = \l \a + c$, and hence the
corresponding principal spaces are $\varphi$-invariant.  The *-Einstein condition would then require that $\l_1^2 = \l_3^2$
which cannot be true since $\a \ne 0$.

Thus we have verified Theorem \ref{TakagiList}.  We now summarize the classification of *-Einstein Hopf hypersurfaces
as follows:
\begin{theorem}\label{starEinsteinClassification}
The *-Einstein Hopf hypersurfaces in $\CP^n$ and $\CH^n$, where $n \ge 2$, are precisely
\begin{enumerate}[(i)]
\item
the Hopf hypersurfaces whose Hopf principal curvature $\alpha$ vanishes, and
\item
the open connected subsets of homogeneous Hopf hypersurfaces of types A0, A1 and B.
\end{enumerate}
\end{theorem}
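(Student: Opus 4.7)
My plan is to assemble Theorem \ref{starEinsteinClassification} by combining the three results already proved in this section, namely Theorems \ref{StarRicciTheorem}, \ref{nonzeroAlpha}, and \ref{TakagiList}. The work is almost entirely bookkeeping: the hard ``classification'' direction has been carried out in Theorem \ref{nonzeroAlpha}, and the two halves of the converse are packaged inside Theorems \ref{StarRicciTheorem} and \ref{TakagiList} respectively. I would therefore keep the argument short and structural, rather than introducing any new computations.

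For the easy direction of the equivalence, I would argue as follows. If $M$ satisfies (i), the second bullet of Theorem \ref{StarRicciTheorem} gives $S^*X = (2n+1)cX$ for every $X \in W^\perp$ together with the constant *-scalar curvature $\rho^* = 2(n-1)(2n+1)c$, so \eqref{starEinsteinCondition} holds and $M$ is *-Einstein. If instead $M$ satisfies (ii), Theorem \ref{TakagiList} says directly that type A0, A1, and B hypersurfaces are *-Einstein, and the *-Einstein property evidently passes to open connected subsets.

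For the harder direction, let $M$ be a *-Einstein Hopf hypersurface. If $\alpha = 0$, case (i) holds and we are done. Otherwise $\alpha \ne 0$, and Theorem \ref{nonzeroAlpha} realizes $M$ as an open subset of some homogeneous Hopf hypersurface in the Takagi or Montiel list. Consulting Theorem \ref{TakagiList}, the *-Einstein possibilities on those lists are exactly types A0, A1, B, together with the particular A2 tubes of radius $\frac{\pi}{4}r$ over $\CP^k$; but the latter are characterized there by $\alpha = 0$, contradicting our standing assumption. Hence $M$ is an open connected subset of a type A0, A1, or B hypersurface, which is case (ii).

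The one subtlety worth flagging, though it is not an obstacle, is that (i) and (ii) are not disjoint: for example, a geodesic sphere in $\CP^n$ of radius $\frac{\pi}{4}r$ is type A1 and also has $\alpha = 0$. Since the theorem is stated as a disjunction, this overlap is harmless; the genuine content is the equality of sets. Given that all three inputs are already available, I anticipate no real obstacle, and the only care needed is to keep the role of the hypothesis $\alpha \ne 0$ transparent when invoking Theorems \ref{nonzeroAlpha} and \ref{TakagiList} in sequence.
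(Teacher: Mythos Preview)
Your proposal is correct and matches the paper's approach exactly: the paper presents Theorem \ref{starEinsteinClassification} as a summary of Theorems \ref{TakagiList}, \ref{StarRicciTheorem}, and \ref{nonzeroAlpha}, with no further argument, and your write-up simply makes the assembly explicit. Your observation that the special A2 tube has $\alpha=0$ (hence is excluded once one assumes $\alpha\ne 0$), and your remark on the overlap between (i) and (ii), are both consistent with the paper's own commentary.
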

\begin{remark}
Note that geodesic spheres (type A1) of radius $\frac{\pi}{4}r$ in $\CP^n$ have $\alpha = 0$ and thus
satisfy both (i) and (ii) in Theorem \ref{starEinsteinClassification}.  Other than that, there is no overlap.
For further detail on
the structure of Hopf hypersurfaces with $\alpha=0$, see \cite{cecilryan} and \cite{IveyRyan}.
\end{remark}

\begin{corollary}\label{cor2}
For a Hopf hypersurface $M^{2n-1}$ in $\CP^n$ or $\CH^n$, where $n \ge 2$, $(\varphi A)^2$ cannot vanish identically.
\end{corollary}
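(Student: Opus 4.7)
The plan is to argue by contradiction, using the classification already obtained in Theorem \ref{starEinsteinClassification}. Suppose $(\varphi A)^2 \equiv 0$ on $M$. Then by \eqref{Sstarform} of Theorem \ref{StarRicciTheorem},
\[
S^* = -2nc\,\varphi^2.
\]
Since $\varphi^2$ restricted to $W^\perp$ is $-I$, this gives $\langle S^*X,Y\rangle = 2nc\langle X,Y\rangle$ for all $X,Y\in W^\perp$, and $\rho^* = 4n(n-1)c$ is a constant. Hence $M$ satisfies the *-Einstein condition, and Theorem \ref{starEinsteinClassification} applies: $M$ is either (i) a Hopf hypersurface with $\alpha=0$, or (ii) an open subset of a homogeneous Hopf hypersurface of type A0, A1, or B.

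I would then rule out each case individually. In case (i), the second bullet of Theorem \ref{StarRicciTheorem} says that when $\alpha=0$ one has $(\varphi A)^2 X = -cX$ for all $X\in W^\perp$, and since $c\ne 0$ this directly contradicts the assumption $(\varphi A)^2\equiv 0$. In case (ii), I would use the computations already carried out in the proof of Theorem \ref{TakagiList}. For type B, that proof shows $(\varphi A)^2 X = c X$ on $W^\perp$, again nonzero. For types A0 and A1, every principal subspace $V\subseteq W^\perp$ is $\varphi$-invariant, so on $V$ the operator $(\varphi A)^2$ equals $-\lambda^2\,I$ where $\lambda$ is the corresponding principal curvature; and $\lambda$ satisfies $\lambda^2 = \lambda\alpha + c$ by \eqref{basicHopf1}, so $\lambda=0$ would force $c=0$, a contradiction. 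Thus in every possible case $(\varphi A)^2$ is a nonzero multiple of the identity on (each piece of) $W^\perp$.

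There is really no hard step here: the corollary reduces to extracting the value of $(\varphi A)^2$ from the case-by-case analysis already performed in Theorems \ref{StarRicciTheorem}, \ref{TakagiList}, and \ref{starEinsteinClassification}. The only minor subtlety is noticing that the hypothesis $(\varphi A)^2\equiv 0$ is strong enough to force $M$ itself to be *-Einstein, which is what unlocks the classification.
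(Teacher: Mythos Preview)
Your proof is correct and follows essentially the same route as the paper: deduce that $M$ is *-Einstein, invoke the classification, and check in each case that $(\varphi A)^2$ acts as a nonzero multiple of the identity on $W^\perp$. The only cosmetic difference is that the paper cites Theorem~\ref{nonzeroAlpha} for the $\alpha\ne 0$ case (landing in the full Takagi/Montiel lists and then reading off the principal-curvature identities), whereas you cite the sharper Theorem~\ref{starEinsteinClassification}, which already restricts to types A0, A1, B; the subsequent case analysis is the same.
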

\begin{proof}
Suppose that $(\varphi A)^2 = 0$.  Then $M$ is *-Einstein from Theorem \ref{StarRicciTheorem}.
If $\a = 0$, the result is immediate from \eqref{basicHopf}.  Otherwise, Theorem \ref{nonzeroAlpha} says
that $M$ must occur in the lists of Takagi or Montiel.  However, none of
the principal curvatures of these hypersurfaces (for principal spaces in $W^\perp$) vanish.
In fact, they all satisfy identities of the
form $\l^2 = \a\l + c\ $ or $\l\nu + c = 0$, so that the eigenvalues of $(\varphi A)^2$ on $W^\perp$ are all of
the form  $-\l^2 \ne 0$ or $-\l\nu = c \ne 0$.
\end{proof}

\section{Conditions on the Ricci tensor}\label{RicciConditions}
We recall the notation $R(X,Y)\cdot T$ for the action of a curvature operator on any tensor field $T$
(see \cite{nrsurvey} p. 235).  For the special case of the Ricci tensor $S$,
$$R(X,Y)\cdot S = R(X,Y) \circ S - S\circ R(X,Y).$$
For a Hopf hypersurface $M^{2n-1}$, where $n \ge 3$, the pseudo-Einstein condition is known to be
equivalent to the following:
\begin{equation}\label{pseudo-R}
\<(\RR(X_1, X_2)\cdot S)X_3, X_4\> = 0
\end{equation}
for all $X_1, X_2, X_3$ and $X_4$ in $W^\perp$ (see Theorem 6.30 of \cite{nrsurvey}).  A hypersurface
satisfying \eqref{pseudo-R} is called ``pseudo-Ryan" in the literature.  We now discuss this condition
for $n=2$ and how it relates to the pseudo-Einstein and *-Einstein conditions.

With this in mind, let $M^3$ be a (not necessarily Hopf) hypersurface in $\CP^2$ or $\CH^2$.
Suppose that there is a point $p$ (and hence an open neighborhood of $p$) where $AW \ne \alpha W$.
Then there is a positive function $\beta$ and a unit vector field $X \in W^\perp$ such that
$$AW = \alpha W + \beta X.$$
Let $Y = \varphi X$.  Then there are smooth functions $\lambda$, $\mu$, and $\nu$ defined near $p$ such
that with respect to the orthonormal frame $(W,X,Y)$,
\begin{equation} A =
 \begin {pmatrix} \a&\b& 0 \\
                \b &\l & \mu \\
                            0&\mu&\nu
\end {pmatrix}.\label{shapematrix}
\end{equation}
Note that if, on the other hand, $M$ is Hopf, then there still exists an orthonormal
frame $(W,X,Y)$ near any point, such that $Y = \varphi X$ and \eqref{shapematrix} holds
with $\beta=0$; however, the choice of $(X,Y)$ is only unique up to rotation.

Using \eqref{Ricci}, we compute the matrix of the Ricci tensor $S$ with respect to this frame,
to get,
\begin{equation} \label{RicciMatrix}
S =
\begin{pmatrix} 2c+\a(\l+\nu)-\b^2&\nu \b& -\mu\b \\
                \nu\b &5c+\l(\nu+\a)-\b^2-\mu^2 & \mu\a \\
                          -\mu\b&\mu\a&5c+\nu(\l+\a)-\mu^2
\end{pmatrix}.
\end{equation}
It is easy to check that \eqref{pseudo-R} is satisfied if and only if
$(\RR(X, Y)\cdot S)X$ and $(\RR(X, Y)\cdot
S)Y$ are multiples of $W$.

\begin{prop} \label{pseudo-R-criteria}
With $X, Y, \b, \mu, \nu$, and $\l$ defined as
above, $(\RR(X, Y)\cdot S)X$ and $(\RR(X, Y)\cdot
S)Y$ are multiples of $W$
 if and only if
\begin{equation}\label{firstCRcond}
\mu(\b^2\nu -\a(4c + \l\nu - \mu^2))=0
\end{equation}
and
\begin{equation}\label{secondCRcond}
\b^2(\mu^2-\nu^2)=(4c + \l\nu -\mu^2)(\a(\l-\nu)-\b^2).
\end{equation}
This equivalence also holds at a point where $AW = \a W$, where we take $X$
to be any unit principal vector in $W^\perp$, $Y = \varphi X$, and $\b = \mu =
0$.
\end{prop}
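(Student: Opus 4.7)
The plan is to compute $(\RR(X,Y)\cdot S)X$ and $(\RR(X,Y)\cdot S)Y$ directly in the orthonormal frame $(W, X, Y)$, using the Gauss equation \eqref{gausseq} together with the matrices \eqref{shapematrix} and \eqref{RicciMatrix}, and then to impose that the $X$- and $Y$-components of both vectors vanish.

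First I would use $\varphi X = Y$, $\varphi Y = -X$, and $\langle X, \varphi Y\rangle = -1$ to reduce \eqref{gausseq} at this pair of arguments to
$$\RR(X,Y) = AX \wedge AY + 2c\,(X\wedge Y) - 2c\,\varphi,$$
so that, writing $P := 4c + \lambda\nu - \mu^2$ for brevity,
$$\RR(X,Y)W = -\mu\beta X - \nu\beta Y, \quad \RR(X,Y)X = \mu\beta W - P Y, \quad \RR(X,Y)Y = \nu\beta W + P X.$$
Combined with the columns of $S$ read off from \eqref{RicciMatrix}, the two vectors $(\RR(X,Y)\cdot S)Z = \RR(X,Y)SZ - S\RR(X,Y)Z$ at $Z = X$ and $Z = Y$ become completely explicit, and requiring each to be a multiple of $W$ amounts to demanding four scalar coefficients to vanish.

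Because $\RR(X,Y)$ is skew-adjoint and $S$ is symmetric, the commutator $\RR(X,Y)\cdot S$ is itself a symmetric operator, so the $Y$-component of $(\RR(X,Y)\cdot S)X$ equals the $X$-component of $(\RR(X,Y)\cdot S)Y$; this cuts the count down to three scalars, and two of those turn out to be equal up to sign. Specifically, I expect the $X$-component of $(\RR(X,Y)\cdot S)X$ to come out to $2\mu(\alpha P - \nu\beta^2)$, matching \eqref{firstCRcond}; the $Y$-component of $(\RR(X,Y)\cdot S)Y$ to be the negative of the same expression; and the common mixed coefficient to be $\beta^2(\mu^2 - \nu^2) + P(s_{YY} - s_{XX})$. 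Substituting $s_{YY} - s_{XX} = \alpha(\nu - \lambda) + \beta^2$, which one reads directly off \eqref{RicciMatrix}, puts the mixed coefficient into the form of \eqref{secondCRcond}.

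For the Hopf case I would simply set $\beta = \mu = 0$ with $AX = \lambda X$ and $AY = \nu Y$. Condition \eqref{firstCRcond} becomes $0 = 0$, and the scalar in \eqref{secondCRcond} collapses to $\alpha(\lambda - \nu)(4c + \lambda\nu)$. At such a point $\RR(X,Y)$ annihilates $W$ and preserves $\operatorname{span}\{X,Y\}$, and both $X$ and $Y$ are eigenvectors of $S$, so $(\RR(X,Y)\cdot S)X$ automatically lies in $\operatorname{span}\{Y\}$ with coefficient equal to this scalar, and the equivalence follows. The main obstacle is purely the careful bookkeeping of roughly a dozen entry-by-entry products, and the one small algebraic insight that matters is isolating $s_{YY} - s_{XX} = \alpha(\nu - \lambda) + \beta^2$, which is what causes the mixed coefficient to collapse into the stated form.
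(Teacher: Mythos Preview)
Your proposal is correct and follows essentially the same approach as the paper: compute the matrix of $\RR(X,Y)$ from the Gauss equation (the paper records this as \eqref{curvatureMatrix}), combine it with the Ricci matrix \eqref{RicciMatrix}, and read off the $X$- and $Y$-components; the paper simply declares the calculation ``straightforward'' without writing out the details you supply. Your use of the symmetry of $\RR(X,Y)\cdot S$ and the shortcut $s_{YY}-s_{XX}=\alpha(\nu-\lambda)+\beta^2$ are nice bookkeeping devices, and your handling of the Hopf case matches the paper's observation that the computation is pointwise.
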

\begin{proof} The $X$ and $Y$ components of $(\RR(X, Y)\cdot S)X$
must be computed.  The Gauss equation gives the matrix of the curvature operator as
\begin{equation} \label{curvatureMatrix}
\RR(X, Y) =
\begin{pmatrix} 0&\mu \b& \nu\b \\
                -\mu\b & 0 & \l\nu -\mu^2 + 4c \\
                          -\nu\b& -(\l\nu -\mu^2 + 4c )&0
\end{pmatrix}.
\end{equation}

From this and matrix of $S$, the calculation is straightforward.
Also, since the calculation is pointwise, the expressions \eqref{RicciMatrix} and \eqref{curvatureMatrix}
and the conclusion are
equally valid at a point where $AW = \a W$, when $X$ is taken to be any unit principal vector in $W^\perp$,
$ Y = \varphi X$, and $\b = \mu = 0$.
\end{proof}

In particular, for a Hopf hypersurface, we have
\begin{theorem} \label{pseudoRyanEinstein}
A Hopf hypersurface $M^3$ in $\CP^2$ or $\CH^2$ is pseudo-Ryan if and
only if it is pseudo-Einstein.
\end{theorem}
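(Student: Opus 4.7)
The plan is to reduce the theorem to a single pointwise algebraic identity in the principal curvatures, by specializing Proposition~\ref{pseudo-R-criteria} and the Ricci matrix~\eqref{RicciMatrix} to the Hopf case. In the Hopf setting $\beta=0$, and since (as shown in the proof of Theorem~\ref{nonzeroAlpha}) every principal direction in $W^\perp$ has a principal $\varphi$-image, one may rotate the frame so that $\mu=0$ as well; the borderline case $\alpha^2+4c=0$ is handled using~\eqref{basicHopf2}. With $\beta=\mu=0$, condition~\eqref{firstCRcond} is automatic and condition~\eqref{secondCRcond} collapses to $\alpha(\lambda-\nu)(4c+\lambda\nu)=0$, while~\eqref{RicciMatrix} becomes diagonal and the pseudo-Einstein condition becomes $\alpha(\lambda-\nu)=0$ together with the constancy of the entries $5c+\lambda\nu+\lambda\alpha$ and $2c+\alpha(\lambda+\nu)$.

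The direction pseudo-Einstein $\Rightarrow$ pseudo-Ryan is then immediate, since $\alpha(\lambda-\nu)=0$ kills the pseudo-Ryan product. For the converse I would split on the Hopf principal curvature. When $\alpha=0$, the Hopf identity~\eqref{basicHopf} forces $\lambda\nu=c$, so both $W^\perp$-entries of $S$ equal $6c$ and $SW=2cW$, giving pseudo-Einstein with $\rho=6c$ and $\sigma=-4c$. When $\alpha\ne 0$ and $\lambda=\nu$ throughout $M$, the Hopf identity $\lambda^2=\alpha\lambda+c$ and the constancy of $\alpha$ make $\lambda$ locally, hence by connectedness globally, constant, so pseudo-Einstein follows directly from~\eqref{RicciMatrix}.

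The remaining sub-case, and the expected main obstacle, is when $\alpha\ne 0$ and there is an open set $U\subset M$ on which $\lambda\ne\nu$ and therefore $\lambda\nu=-4c$. Here~\eqref{basicHopf1} gives $\lambda+\nu=-10c/\alpha$, so $\lambda$ and $\nu$ are distinct constants on $U$ and $M$ restricted to $U$ is a Hopf hypersurface with constant principal curvatures. To rule this out I would invoke the Takagi/Montiel classification: for $n=2$, types A2, C, D, and E do not occur, types A1 and A0 have $\lambda=\nu$, and type B satisfies $\lambda\nu=-c$ (as observed in the proof of Theorem~\ref{TakagiList}). Since $c\ne 0$, the equations $\lambda\nu=-c$ and $\lambda\nu=-4c$ are incompatible, so this sub-case never arises and the preceding cases cover all of $M$.
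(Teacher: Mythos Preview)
Your proof is correct and follows essentially the same route as the paper's: specialize Proposition~\ref{pseudo-R-criteria} to the Hopf case (where taking $X$ principal in $W^\perp$ automatically gives $\mu=0$, since $W^\perp$ is two-dimensional and $A$-invariant), reduce~\eqref{secondCRcond} to $\alpha(\lambda-\nu)(4c+\lambda\nu)=0$, and eliminate the possibility $\alpha\ne 0$, $\lambda\ne\nu$, $\lambda\nu=-4c$ by appealing to the Takagi/Montiel classification of Hopf hypersurfaces with constant principal curvatures. The only substantive difference is that the paper cites a pointwise pseudo-Einstein criterion from \cite{kimryan2} rather than reading it off from~\eqref{RicciMatrix} as you do, and organizes the argument point-by-point rather than via your global dichotomy; your more explicit treatment of the constancy of $\rho$ and $\sigma$ is a small bonus. (Your remark about needing~\eqref{basicHopf2} for the case $\alpha^2+4c=0$ is unnecessary here: for $n=2$, $\varphi X$ is principal whenever $X$ is, simply because $W^\perp$ is two-dimensional.)
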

\begin{proof}
We refer to the pointwise criterion for a Hopf hypersurface to be pseudo-Einstein, as given in
Proposition 2.21 of \cite{kimryan2}.
Let $p$ be any point of the Hopf hypersurface $M$ and let $X \in W^{\perp}$ be a unit
principal vector at $p$ with corresponding principal curvature $\l$.  Assume that \eqref{pseudo-R}
holds at $p$.  From \eqref{secondCRcond}, we have $\a(4c+\l\nu)(\l-\nu)=0.$

If $\a = 0$, then $\l\nu = c$ by \eqref{basicHopf1}, and the pseudo-Einstein criterion is
satisfied at $p$.  If $\a \ne 0$ and $\l \ne \nu$ at $p$, then $4c+\l\nu = 0$ near $p$.
Using \eqref{basicHopf1}, we get $-4c = \frac{\l+\nu}{2} \a +c$ so that $\l+\nu$ is also
constant.  Therefore, $\l$ and $\nu$ are constant and a neighborhood of $p$ is a
Hopf hypersurface with constant principal curvatures.
However, the well-known classification of such does not admit
this possibility.  As seen from Theorem 4.13 of \cite{nrsurvey}, such a hypersurface
would have to be in the Takagi/Montiel list and thus have $\l\nu+c = 0$.
Because of this contradiction, we conclude that $\l = \nu$ at $p$, and thus the pseudo-Einstein
criterion is satisfied there.  Since $p$ was arbitrary, $M$ must be pseudo-Einstein.

Conversely, if $M$ is pseudo-Einstein, then either $\a=0$ or $\l = \nu$ so that the equations
in Proposition \ref{pseudo-R-criteria} are satisfied, and $M$ is pseudo-Ryan.
\end{proof}

In view of our work in the previous section, we see that for Hopf hypersurfaces in $\CP^2$ and
$\CH^2$, the pseudo-Einstein, *-Einstein, and pseudo-Ryan conditions are equivalent.  We now look
at non-Hopf hypersurfaces.

First, we improve Proposition \ref{pseudo-R-criteria}.  Specifically, we
deduce that under the conditions of the proposition, we must have $\mu = 0$.
\begin{prop} \label{pseudo-R-refined}
In the notation of Proposition \ref{pseudo-R-criteria}, $(\RR(X, Y)\cdot S)X$ and $(\RR(X, Y)\cdot
S)Y$ are multiples of $W$
 if and only if $\mu = 0$,
and
$$ \b^2\nu^2= - (4c + \l\nu)(\a(\l-\nu)-\b^2).$$
\end{prop}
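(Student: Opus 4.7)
The proposition asserts that within the setting of Proposition \ref{pseudo-R-criteria}, the two conditions \eqref{firstCRcond} and \eqref{secondCRcond} are together equivalent to $\mu = 0$ combined with the single displayed identity. The ``if'' direction is immediate: when $\mu = 0$, equation \eqref{firstCRcond} is trivially satisfied, and \eqref{secondCRcond} reduces exactly to the displayed equation. So the real content is the forward implication, namely that \eqref{firstCRcond} and \eqref{secondCRcond} together force $\mu = 0$.

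I plan a pointwise argument by contradiction: suppose \eqref{firstCRcond} and \eqref{secondCRcond} hold at a point but $\mu \neq 0$ there. In the Hopf case we have $\beta = \mu = 0$ by construction of the frame, contradicting $\mu \neq 0$; so we may assume $\beta \neq 0$. I then split on whether $\alpha$ vanishes. If $\alpha = 0$, then \eqref{firstCRcond} with $\mu \neq 0$ forces $\beta^2 \nu = 0$, hence $\nu = 0$, and substituting $\alpha = \nu = 0$ into \eqref{secondCRcond} yields $\beta^2 \mu^2 = -\beta^2(4c - \mu^2)$, which after cancelling $\beta^2$ reduces to $4c = 0$, contradicting $c \neq 0$.

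If $\alpha \neq 0$, then \eqref{firstCRcond} can be solved as $4c + \lambda\nu - \mu^2 = \beta^2 \nu/\alpha$. Substituting this into the right-hand side of \eqref{secondCRcond} and cancelling a common factor of $\beta^2$ yields $\mu^2 = \lambda\nu - \beta^2 \nu/\alpha$. Subtracting this from the preceding relation gives $4c = 0$, again a contradiction. I do not anticipate a serious obstacle: the argument is pure algebraic manipulation, the key observation being that the combination $\lambda\nu - \mu^2$ appears in both conditions so that the $\beta^2 \nu/\alpha$ contributions cancel cleanly, leaving only the ambient curvature term $4c$, which cannot vanish in a non-flat complex space form. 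The only subtlety is to handle the Hopf and non-Hopf cases separately and to treat the sub-case $\alpha = 0$ before passing to division by $\alpha$.
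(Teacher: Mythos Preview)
Your proof is correct and follows essentially the same route as the paper's: the only cosmetic difference is that the paper multiplies \eqref{secondCRcond} through by $\alpha$ (avoiding division) to obtain $4c\alpha=0$ directly, then deduces $\alpha=\nu=0$ and reaches the same contradiction, whereas you split explicitly into the cases $\alpha=0$ and $\alpha\ne 0$. The algebraic content is identical, and your explicit handling of the Hopf-point case ($\beta=\mu=0$) is a nice clarification that the paper leaves implicit.
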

\begin{proof}
Suppose that \eqref{firstCRcond} and \eqref{secondCRcond} hold and $\mu \ne 0$ at some point.  Then,
in a neighborhood of this point, we have
\begin{equation}\label{betaduct}
\b^2\nu  = \a(4c + \l\nu - \mu^2).
\end{equation}
Multiplying \eqref{secondCRcond} by $\a$, we get
\begin{equation}\label{alphaduct}
\a\b^2(\mu^2-\nu^2)=\a (4c + \l\nu -\mu^2)(\a(\l-\nu)-\b^2),
\end{equation}
which, upon substitution from \eqref{betaduct}, yields
\begin{equation}
\a\b^2(\mu^2-\nu^2) = \b^2 \nu (\a(\l-\nu)-\b^2),
\end{equation}
Cancelling $\b^2$ and substituting for $\b^2 \nu$ from the first equation, we get
$4c\a = 0$.  Therefore $\a= \nu = 0$ and \eqref{alphaduct} reduces to
\begin{equation}\label{pseudoRcond}
\b^2\mu^2 = -(4c - \mu^2)\b^2,
\end{equation}
a contradiction.  We conclude that $\mu$ must vanish identically.  As the converse is trivial,
our proof is complete.
\end{proof}
We now turn our attention to the *-Einstein condition.
In a neighborhood of a point where $AW\ne\alpha W$, using the same orthonormal frame $(W, X, \varphi X)$,
it is easy to see from \eqref{shapematrix} that
\begin{equation} (\varphi A)^2 =
 \begin {pmatrix}\label{phiAsquared} 0&0& 0 \\
                -\b \nu &\mu^2-\l\nu & 0\\
                            \b \mu & 0 & \mu^2-\l\nu \\
\end {pmatrix}.
\end{equation}
Then, using \eqref{Sstarform}, we see that $\rho^*$ is locally constant if and only if $\mu^2-\l\nu$ is.
For a point where $AW = \a W$, we let $X$ be a unit principal vector in $W^\perp$ (as before) and let $Y = \varphi X$.
Then there are numbers $\a$, $\l$ and $\nu$ such that equations \eqref{shapematrix} and \eqref{phiAsquared}
still hold at this point, but with $\b = \mu =0$.

Although we do not wish to discuss ruled hypersurfaces in depth in this paper, they are useful
for demonstrating the non-equivalence of the pseudo-Ryan and *-Einstein conditions. To be concise,
we define a hypersurface in $\CP^n$ or $\CH^n$ to be {\it ruled} if
\begin{equation} \label{rrh}
A W^\perp \subseteq \operatorname{span} W.
\end{equation}
Geometrically, this means that $M$ is foliated by totally geodesic complex hypersurfaces (i.e. real codimension 2)
which are orthogonal to $W$.

\begin{prop}
For non-Hopf hypersurfaces $M^3$ in $\CP^2$ and $\CH^2$, the pseudo-Ryan and *-Einstein
conditions are not equivalent.  In fact,
\begin{itemize}
\item
All ruled hypersurfaces in $\CP^2$ and $\CH^2$ are *-Einstein;
\item
No pseudo-Ryan hypersurface in $\CP^2$ or $\CH^2$ is ruled.
\end{itemize}
\end{prop}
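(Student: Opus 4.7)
The plan is to exploit the adapted shape operator \eqref{shapematrix}. In the frame $(W, X, Y = \varphi X)$ chosen so that $AW = \a W + \b X$, the ruled condition $AW^\perp \subseteq \operatorname{span}(W)$ requires both $AX$ and $AY$ to lie in $\operatorname{span}(W)$. Reading off the second and third columns of \eqref{shapematrix}, this forces $\l = \mu = \nu = 0$ pointwise.

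For the first bullet, substituting $\l = \mu = \nu = 0$ into \eqref{phiAsquared} yields $(\varphi A)^2 \equiv 0$. Theorem \ref{StarRicciTheorem} with $n = 2$ then gives $S^* = -4c\,\varphi^2$. Since $\varphi^2 = -I$ on $W^\perp$, we obtain $\<S^* X, Y\> = 4c\<X, Y\>$ for all $X, Y \in W^\perp$; and $\rho^* = -4c\,\operatorname{trace}\varphi^2 = 8c$, a constant. As $\rho^*/(2(n-1)) = 4c$ matches the coefficient required by \eqref{starEinsteinCondition}, the *-Einstein condition holds. The point worth emphasizing is that $\rho^*$ is automatically constant, even though $\a$ and $\b$ themselves may be non-constant on $M$.

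For the second bullet, first observe that a ruled hypersurface cannot be Hopf everywhere: if $\b \equiv 0$ then $A|_{W^\perp} \equiv 0$, and the Hopf identity \eqref{basicHopf} would collapse to $c\varphi \equiv 0$ on $W^\perp$, impossible for $c \ne 0$. Hence $\b \ne 0$ on an open subset $U \subseteq M$. Assume $M$ is pseudo-Ryan; substituting $\l = \mu = \nu = 0$ into the criterion of Proposition \ref{pseudo-R-refined}, the equation
\[ \b^2 \nu^2 \;=\; -(4c + \l\nu)\bigl(\a(\l - \nu) - \b^2\bigr) \]
collapses on $U$ to $4c\b^2 = 0$, forcing $\b = 0$ and contradicting the choice of $U$. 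Hence no ruled hypersurface is pseudo-Ryan.

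The main obstacle is the brief side argument ensuring $\b \not\equiv 0$ on any ruled hypersurface (needed for the second bullet); beyond that, both parts reduce to direct substitution into formulas already established in the paper.
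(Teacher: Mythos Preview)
Your proof is correct and follows essentially the same approach as the paper's: verify $(\varphi A)^2=0$ for the first bullet, then plug $\lambda=\mu=\nu=0$ into Proposition~\ref{pseudo-R-refined} for the second. The one difference worth noting is your argument that a ruled hypersurface cannot be Hopf: you obtain this directly from the identity \eqref{basicHopf}, whereas the paper invokes Corollary~\ref{cor2} (whose proof in turn passes through the classification in Theorem~\ref{nonzeroAlpha}); your route is more self-contained.
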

\begin{proof}
Clearly, a ruled hypersurface satisfies $(\varphi A)^2 = 0$ and hence is *-Einstein by Theorem \ref{StarRicciTheorem}
with $\rho^* = 2(n-1)(2n+1)c$.  This fact was observed by Hamada \cite{hamadaTokyo}.

It is also immediate from Corollary \ref{cor2} that no Hopf hypersurface can be ruled.  Therefore, any
ruled hypersurface in $\CP^2$ or $\CH^2$,
must have a point $p$ with a neighborhood in which the setup introduced at the beginning of this section holds
with $\b \ne 0$ and $\mu=\nu=\l=0$. The equations in Proposition \ref{pseudo-R-refined} reduce to $-4c \b^2= 0$,
a contradiction.  Thus no pseudo-Ryan hypersurface in $\CP^2$ or $\CH^2$ is ruled.
(It is easy to check that this also holds in $\CP^n$ and $\CH^n$ for $n \ge 3$).
\end{proof}

We now derive the conditions for a non-Hopf hypersurface to be {\it both} pseudo-Ryan and *-Einstein.
\begin{prop}\label{etcProp}
Let $M^3$ be a non-Hopf hypersurface $M^3$ in $\CP^2$ and $\CH^2$ that is both pseudo-Ryan and *-Einstein.
Then around any point where $AW \ne \a W$ there is
either

(i) a neighborhood in which
the components of the shape operator \eqref{shapematrix}
with respect to the standard basis satisfy
$\mu=\l=0$, $\b\nu \ne 0$, and
\begin{equation}\label{firstoption}
\b^2(\nu^2-4c)=4c\a \nu,
\end{equation}

or (ii) a neighborhood in which $\mu=0$, $\sigma=-\lambda\nu$ is a nonzero constant,
and
\begin{equation}\label{secondoption}
\b^2\nu^2=(4c - \sigma)\left(\a\left(\nu+\frac{\sigma}{\nu} \right)+\b^2\right).
\end{equation}
\end{prop}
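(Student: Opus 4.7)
The plan is to add the *-Einstein content to the pseudo-Ryan refinement from Proposition \ref{pseudo-R-refined}. That proposition already guarantees $\mu=0$ and
$$\beta^{2}\nu^{2} \;=\; -(4c+\lambda\nu)\bigl(\alpha(\lambda-\nu)-\beta^{2}\bigr) \qquad (\star)$$
throughout a neighborhood of the non-Hopf point in question, so the task is to tease out what *-Einstein adds and then substitute back into $(\star)$.

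With $\mu=0$, I would compute $S^{*}$ using Theorem \ref{StarRicciTheorem}. From \eqref{phiAsquared}, $(\varphi A)^{2}$ restricted to $W^{\perp}=\operatorname{span}\{X,\varphi X\}$ acts as $-\lambda\nu$ times the identity, since its only non-scalar entry $-\beta\nu$ lands in $\operatorname{span}\{W\}$ and so disappears when paired with vectors in $W^{\perp}$. Combined with $-2nc\varphi^{2}=4c\,\mathrm{Id}_{W^{\perp}}$, this yields $\langle S^{*}Z,Z'\rangle = (4c+\lambda\nu)\langle Z,Z'\rangle$ for all $Z,Z'\in W^{\perp}$, together with $\langle S^{*}W,W\rangle = 0$, so $\rho^{*} = 2(4c+\lambda\nu)$. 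Consequently the pointwise *-Einstein identity is automatic once $\mu=0$, and the hypothesis contributes only that $\sigma := -\lambda\nu$ is locally constant.

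I would then branch on $\sigma$. If $\sigma\ne 0$ then $\nu$ is nowhere zero; substituting $\lambda=-\sigma/\nu$ into $(\star)$ and rearranging turns it into \eqref{secondoption}, yielding case~(ii). If $\sigma=0$ then $\lambda\nu\equiv 0$; the possibility that $\lambda$ and $\nu$ vanish simultaneously reduces $(\star)$ to $4c\beta^{2}=0$, which contradicts $\beta>0$ and $c\ne 0$, so by smoothness one restricts to the regime $\lambda\equiv 0$, $\nu\ne 0$, and $(\star)$ becomes exactly \eqref{firstoption}, yielding case~(i).

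The main obstacle is the computation in the second paragraph: because $S^{*}$ is not symmetric for non-Hopf hypersurfaces, one must verify carefully that its skew piece lives entirely in the $W$-column and therefore cannot spoil the *-Einstein identity, which is only tested against pairs of $W^{\perp}$-vectors. Once that reduction is granted, the remaining dichotomy in $\sigma$ is pure algebra.
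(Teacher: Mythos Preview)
Your approach mirrors the paper's almost exactly: invoke Proposition~\ref{pseudo-R-refined} to get $\mu=0$ and $(\star)$, use \eqref{phiAsquared} to see that the *-Einstein hypothesis contributes only the constancy of $\sigma=-\lambda\nu$, and then split on whether $\sigma$ vanishes. The case $\sigma\ne 0$ is handled correctly.

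There is, however, a genuine gap in your treatment of $\sigma=0$. From $\lambda\nu\equiv 0$ and the observation that $\lambda$ and $\nu$ cannot vanish simultaneously, connectedness gives you two alternatives on a suitable neighborhood: either $\lambda\equiv 0$ with $\nu$ nowhere zero, \emph{or} $\nu\equiv 0$ with $\lambda$ nowhere zero. You simply assert the first and ignore the second; ``by smoothness'' does not select between them, since the roles of $\lambda$ and $\nu$ are not symmetric in the setup (recall that $X$ is singled out by $AW-\alpha W$). You must rule out $\nu\equiv 0$ separately. The paper does this by noting that on such an open set $(\star)$ forces $\alpha\lambda-\beta^{2}=0$, so the shape operator
\[
A=\begin{pmatrix}\alpha&\beta&0\\ \beta&\lambda&0\\ 0&0&0\end{pmatrix}
\]
has rank at most one (the third row vanishes and the upper $2\times 2$ block is singular). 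This contradicts the standard fact that a real hypersurface in a non-flat complex space form has $\operatorname{rank}A\ge 2$ (Proposition~2.14 of \cite{nrsurvey}). Once you insert this step, your argument is complete and essentially identical to the paper's.
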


\begin{proof}
Let $\sigma = - \l \nu.$
Since $M$ is pseudo-Ryan, by Proposition \ref{pseudo-R-refined} we have $\mu=0$ and
\begin{equation}\label{pseudoRcond2}
\beta^2 \nu^2 = - (4c-\sigma)(\alpha(\lambda-\nu) - \beta^2).
\end{equation}
\noindent
Since $M$ is *-Einstein, \eqref{phiAsquared} shows that $\sigma$ is locally constant.
There are two possibilities:
\begin{itemize}
\item If $\sigma=0$ then $\l$ must be identically zero.  Otherwise, there is an open set where $\nu = 0$ and $\a\l-\b^2=0$.
But this would require that  $\operatorname{rank}\ A \le 1$, contradicting a well-known fact about hypersurfaces
(see Proposition 2.14 of \cite{nrsurvey}).  Setting $\sigma=\lambda=0$ in \eqref{pseudoRcond2} yields \eqref{firstoption}.
\item
If $\sigma \ne 0$, then $\nu$ is nonvanishing, and setting $\lambda=-\sigma/\nu$ in \eqref{pseudoRcond2}
gives \eqref{secondoption}.
Note that the constant $4c-\sigma$ must be nonzero, since $\sigma=4c$ in \eqref{secondoption}
would imply that $\nu=0$.
\end{itemize}
\end{proof}

Based on the conditions derived in the previous proposition, we can deduce
\begin{prop} \label{nonHopfpseudoR}
If hypersurface $M^3$ is non-Hopf, pseudo-Ryan and *-Einstein, then (in a neighborhood of a point
where $AW \ne \alpha W$), we have $\sigma\ne 0$
and $\alpha,\beta,\lambda$ constant.
\end{prop}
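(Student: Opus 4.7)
The plan is to combine the algebraic conclusions of Proposition \ref{etcProp} (namely $\mu = 0$ with either case (i): $\lambda = 0$, $\sigma = 0$, $\beta\nu \ne 0$ satisfying \eqref{firstoption}, or case (ii): $\sigma$ a nonzero constant satisfying \eqref{secondoption}) with the Codazzi equation
\begin{equation*}
(\nabla_U A) V - (\nabla_V A) U = c\bigl(\langle U,W\rangle \varphi V - \langle V,W\rangle \varphi U + 2\langle U,\varphi V\rangle W\bigr)
\end{equation*}
in the adapted frame $(W, X, Y = \varphi X)$, so as to rule out case (i) and, in case (ii), to show that $\alpha$, $\beta$, $\lambda$ are locally constant.

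First I would compute the Levi-Civita connection of $M$ in this frame. From \eqref{nablaW} together with $\mu = 0$, one gets $\nabla_W W = \beta Y$, $\nabla_X W = \lambda Y$, $\nabla_Y W = -\nu X$, while the remaining skew-adjoint connection freedom is encoded in the three scalar components of the one-form $\omega_X^Y$. Writing out the Codazzi identity for each of the three pairs $(W,X)$, $(W,Y)$, $(X,Y)$ then yields nine scalar equations relating the frame derivatives $W\alpha$, $X\alpha$, $Y\alpha$, $W\beta$, $\ldots$, $Y\nu$ of the shape-operator entries to these three unknown connection components.

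In case (i), with $\lambda=0$ and the algebraic constraint \eqref{firstoption}, I would differentiate \eqref{firstoption} along $W$, $X$, $Y$ and combine with the Codazzi system. The vanishing of $\lambda$ collapses many terms, and the expected outcome is an algebraic incompatibility forcing either $\beta\nu = 0$ or $c = 0$, contradicting the hypotheses. This eliminates case (i) and establishes $\sigma \ne 0$. In case (ii), the relation $\lambda\nu = -\sigma$ with $\sigma$ constant reduces $Z\lambda$ to $(\sigma/\nu^2)Z\nu$ for every frame vector $Z$; substituting into the Codazzi system and combining with the derivatives of \eqref{secondoption} should force $Z\nu = 0$ for all $Z$, so $\nu$ (hence $\lambda$) is constant, after which the remaining Codazzi components yield $d\alpha = d\beta = 0$.

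The main obstacle will be the bookkeeping required to derive and manipulate the nine scalar Codazzi equations, particularly the elimination of the three unknown components of $\omega_X^Y$ and their coupling with the derivatives of \eqref{firstoption} or \eqref{secondoption}. Case (i) is the more delicate alternative, since the constraint is homogeneous enough that naive manipulations produce tautologies; extracting a contradiction requires identifying the specific linear combination of Codazzi relations that isolates a term provably nonzero when $\beta\nu \ne 0$ and $c \ne 0$.
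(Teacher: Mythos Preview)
Your Codazzi-equation approach is, in principle, equivalent to what the paper does: the paper works with a Pfaffian exterior differential system on the unitary frame bundle, and the structure equations of that system encode exactly the Gauss--Codazzi relations you would write out by hand.  So the difference is one of formalism rather than of content.  The EDS machinery buys the paper a systematic way to detect when further differentiation (prolongation) is required and to organize the ensuing case analysis; in the tensor-calculus language you would have to uncover the same structure step by step.

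Where your outline has a genuine gap is in the depth of differentiation it anticipates.  In both cases (i) and (ii), the nine Codazzi relations together with the first derivatives of \eqref{firstoption} or \eqref{secondoption} do \emph{not} yield a contradiction or force constancy: one of the three components of $\omega_X^Y$ remains free (the paper calls this parameter $\rho$), so the first-order system is consistent.  The paper must then prolong---adjoin $\rho$ as a new unknown and differentiate again, which brings in $d\omega^2_1$, i.e., the requirement that the connection actually have the intrinsic curvature prescribed by the Gauss equation \eqref{gausseq}.  Only at this second stage do integrability conditions appear that force either $\rho=0$ or $\beta,\nu$ locally constant, and each of those alternatives needs yet another round of computation (and, in case (ii), a resultant argument) before the conclusion is reached.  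In your framework this means that after eliminating what you can from Codazzi, you must impose the Ricci identity $R(U,V)Z=[\nabla_U,\nabla_V]Z-\nabla_{[U,V]}Z$ on the frame vectors to constrain the residual connection coefficient, and then split into subcases.  Your plan, invoking only Codazzi and first derivatives of the algebraic constraint, would stall one order short of either conclusion.
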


This will be proved in Section \ref{movingframes} below.
For $\CH^2$, according to Berndt and Diaz-Ramos \cite{BerndtDiaz}, this implies that $M$ is one of the Berndt orbits -- either the
minimal orbit or one of its equidistant hypersurfaces. However, these hypersurfaces do not satisfy the conditions of Proposition
\ref{pseudo-R-refined}.  This can be seen from their principal curvatures which are given explicitly in Proposition 3.5 of \cite{BerndtDiaz}.
On the other hand, all hypersurfaces in $\CP^2$ with constant principal curvatures must be Hopf, as shown by Q.M. Wang \cite{qmwang}.
Thus, in fact, the kind of hypersurface envisioned in Proposition \ref{nonHopfpseudoR} does not exist and we have the following
improvement of Proposition 7:
\begin{theorem} \label{nonempty}
In $\CP^2$ and $\CH^2$ the set of non-Hopf *-Einstein hypersurfaces is disjoint from
the set of non-Hopf pseudo-Ryan hypersurfaces.
\end{theorem}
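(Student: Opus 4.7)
The plan is to argue by contradiction, assuming that there exists a non-Hopf hypersurface $M^3$ in $\CP^2$ or $\CH^2$ that is simultaneously *-Einstein and pseudo-Ryan, and then to combine the structural information furnished by Proposition \ref{nonHopfpseudoR} with two external classification results to exhaust the possibilities.

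First, I pick a point $p\in M$ with $AW\neq\alpha W$ (which exists because $M$ is non-Hopf) and work in a small neighborhood on which the frame $(W,X,\varphi X)$ and shape operator entries $\alpha,\beta,\lambda,\mu,\nu$ of \eqref{shapematrix} are defined. Since $M$ is both pseudo-Ryan and *-Einstein, Proposition \ref{nonHopfpseudoR} applies and tells me that $\sigma=-\lambda\nu\ne 0$ and that $\alpha,\beta,\lambda$ are (locally) constant. Combined with $\mu=0$ and the local constancy of $\sigma$ coming from the *-Einstein condition via \eqref{phiAsquared}, this forces $\nu=-\sigma/\lambda$ to be constant as well. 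Hence the entire matrix \eqref{shapematrix} has constant entries on this neighborhood, so the principal curvatures of $M$ are constant there.

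With constant principal curvatures in hand, I split into the two ambient cases. In $\CP^2$, Wang's theorem \cite{qmwang} states that every real hypersurface with constant principal curvatures is Hopf, contradicting our standing assumption that $M$ is non-Hopf. In $\CH^2$, the Berndt--Diaz-Ramos classification \cite{BerndtDiaz} asserts that a non-Hopf hypersurface with constant principal curvatures must be (an open subset of) one of the Berndt orbits, namely the minimal ruled orbit or one of its equidistant tubes. To eliminate these, I insert their explicit principal curvatures, recorded in Proposition 3.5 of \cite{BerndtDiaz}, into the pseudo-Ryan criterion of Proposition \ref{pseudo-R-refined} and verify that the equation
$$\beta^2\nu^2=-(4c+\lambda\nu)\bigl(\alpha(\lambda-\nu)-\beta^2\bigr)$$
cannot be satisfied for any of these orbits. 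Since both $\CP^2$ and $\CH^2$ are excluded, no such $M$ can exist, proving the theorem.

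The main obstacle is the final step of confronting the Berndt--Diaz-Ramos principal-curvature data with Proposition \ref{pseudo-R-refined}: it is essentially a calculation with explicit hyperbolic-trigonometric expressions, but one has to be careful about which entries of \eqref{shapematrix} correspond to the orthonormal frame $(W,X,\varphi X)$ used here (as opposed to the frame adapted to the Berndt foliation). Once that translation is made, plugging in and simplifying shows the pseudo-Ryan equation fails identically along each orbit, completing the contradiction.
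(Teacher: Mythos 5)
Your proposal is correct and follows essentially the same route as the paper: invoke Proposition \ref{nonHopfpseudoR} to get $\mu=0$, $\sigma\ne 0$ and $\alpha,\beta,\lambda$ constant (hence $\nu=-\sigma/\lambda$ constant, so constant principal curvatures), then rule out $\CP^2$ via Wang's theorem and $\CH^2$ by checking that the Berndt--Diaz-Ramos orbits fail the pseudo-Ryan equation of Proposition \ref{pseudo-R-refined}. The only remark worth adding is that for the final verification you could use the explicit form of the shape operator of the Berndt orbits in the $(W,X,\varphi X)$ frame given in \S\ref{constantpc}, which avoids the frame-translation issue you flag.
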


The examples we will construct in \S\ref{coho} (see Corollary \ref{pseudoRexist}) show that the set of non-Hopf pseudo-Ryan hypersurfaces is non-empty.

\section{Constancy of $\rho^*$}\label{constancy}

It is well-known that for a Riemannian manifold of dimension greater than 2, if $\<SX, Y\> = \rho \<X, Y\>$ for all
vector fields $X$ and $Y$, then $\rho$ must necessarily be constant.  One can ask similarly if,
in the definition \eqref{starEinsteinCondition} of *-Einstein, the stipulation
that $\rho^*$ be constant is redundant.

When $n = 2$, (i.e. for hypersurfaces of dimension 3), we find that the condition is not redundant.
In fact, using Theorem \ref{StarRicciTheorem},
\eqref{shapematrix}, and \eqref{phiAsquared}, we have the following:

\begin{prop}
Every hypersurface in $\CP^2$ or $\CH^2$ satisfies
\begin{equation*}
\<S^* X, Y\> = \frac{\rho^*}{2} \< X, Y\>
\end{equation*}
for all $X$ and $Y$ in $W^\perp$, with $\frac{\rho^*}{2} =4c+\lambda\nu - \mu^2$.
\end{prop}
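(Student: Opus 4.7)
The plan is to read the conclusion directly off Theorem~\ref{StarRicciTheorem} and the matrix \eqref{phiAsquared}, both already established. Specializing \eqref{Sstarform} to $n=2$ gives $S^* = -(4c\varphi^2 + (\varphi A)^2)$. I would first observe that for $X, Y \in W^\perp$ the operator $\varphi^2$ acts as $-I$, so the $-4c\varphi^2$ summand of $S^*$ contributes $4c\<X,Y\>$ to $\<S^*X,Y\>$.

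Next I would inspect the second and third columns of \eqref{phiAsquared}: each has only the diagonal entry $\mu^2-\l\nu$, meaning $(\varphi A)^2$ preserves $W^\perp$ and acts there as the scalar $\mu^2-\l\nu$. The off-diagonal entries $-\b\nu$ and $\b\mu$ appear only in the first column, which describes $(\varphi A)^2 W$; these are irrelevant when both inputs of $\<S^*\cdot,\cdot\>$ lie in $W^\perp$. Combining the two contributions gives
\begin{equation*}
\<S^* X, Y\> = (4c + \l\nu - \mu^2)\<X,Y\> \qquad \text{for all } X,Y\in W^\perp.
\end{equation*}

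To identify the coefficient with $\rho^*/2$, I would compute $\rho^*=\operatorname{trace} S^*$ from the same matrix. The diagonal entries of $(\varphi A)^2$ are $0,\,\mu^2-\l\nu,\,\mu^2-\l\nu$, so $\operatorname{trace}(\varphi A)^2 = 2(\mu^2-\l\nu)$; also $\operatorname{trace}\varphi^2 = -2(n-1)=-2$. Hence $\rho^* = -\bigl(-8c + 2(\mu^2-\l\nu)\bigr) = 2(4c+\l\nu-\mu^2)$, matching the claim. As noted in the paper immediately after \eqref{phiAsquared}, the same representations \eqref{shapematrix} and \eqref{phiAsquared} apply at a point where $AW=\a W$ with $\b=\mu=0$, so the identity holds at every point of $M$ without any Hopf/non-Hopf case split.

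There is essentially no real obstacle here; the proof is a short reassembly of facts already in hand. The only mild point to verify is that the nonzero first-column entries of \eqref{phiAsquared} (which describe the $W^\perp$-components of $(\varphi A)^2 W$) do not disturb the restriction of $S^*$ to $W^\perp$, and they manifestly do not.
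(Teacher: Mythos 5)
Your argument is correct and is exactly the route the paper intends: it states the proposition as an immediate consequence of Theorem \ref{StarRicciTheorem} together with \eqref{shapematrix} and \eqref{phiAsquared}, which is precisely the reassembly you carry out, including the observation that the frame representation with $\b=\mu=0$ covers points where $AW=\a W$. Nothing further is needed.
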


In particular, for a Hopf hypersurface, we have $\frac{\rho^*}{2} =4c+\lambda\nu$.  For a Hopf hypersurface with
 $\alpha^2 +4c\ne0$, it follows from \eqref{basicHopf} that if $\rho^*$ were constant, then each of the principal curvatures $\lambda$ and $\nu$ would have to be constant.  Thus, we can obtain examples of Hopf hypersurfaces with nonconstant $\rho^*$ by constructing examples with non-constant principal curvatures associated to principal directions in $W^\perp$.
These are provided by tubes over holomorphic curves in $\CP^2$ or $\CH^2$;
in the latter case, $\alpha^2 + 4c>0$.  In both spaces, we can also construct
Hopf hypersurfaces with nonconstant principal curvatures using Theorem \ref{HopfIVP}
below, which allows us to prescribe the principal curvature along a principal curve perpendicular
to the structure vector.

\begin{remark}
Our result shows that the constancy of $\rho^*$ should be added to the hypotheses of Lemma 3.1
in Hamada's paper \cite {hamadaTokyo}.
\end{remark}

Before stating the theorem, we will introduce some necessary terminology
 for Frenet-type  invariants of curves in $\Mt=\CP^2$ and $\CH^2$, defined in terms of
unitary frames.\footnote{The usual (Riemannian) construction for Frenet frames along curves in these spaces, as set forth
in several papers by Maeda and collaborators \cite{MaedaCurve1}, \cite{MaedaCurve2},
is not suitable for our purposes, as we prefer to use frames adapted to the complex structure.}
 A {\em unitary frame} at a point $p\in \Mt$ is an orthonormal basis $(e_1, e_2, e_3, e_4)$
for $T_p\Mt$ such that
\begin{equation}
e_2 = \JJ e_1, \qquad e_4 = \JJ e_3.\label{unitarycond}
\end{equation}

\begin{defn}  Let $I\subset \R$ be an open interval, and $\gamma: I \to \Mt$ a regular unit-speed curve.
We say that
$\gamma$ is a regular {\em framed curve} if there exists a unitary
frame $(T, \JJ T, N, \JJ N)$ defined along $\gamma$ such that
$\gamma' = T$ and the frame vectors satisfy
\begin{equation}\label{Frenet}
T' = k_0 \JJ T + k_1 N, \qquad N' = -k_1 T + \tau \JJ N.
\end{equation}
where the primes indicate covariant derivative with respect to $\gamma'$ along $\gamma$,
and $k_0,k_1$ and $\tau$ are smooth functions referred to
as the {\em holomorphic curvature}, {\em transverse curvature} and {\em torsion}
respectively of $\gamma$.
\end{defn}

\begin{theorem}\label{HopfIVP}
Let $\gamma:I \hookrightarrow \Mt$ be a regular real-analytic framed curve with
zero torsion, and transverse curvature given by an analytic function $k_1(s)$.
For any real number $\alpha$ satisfying $\alpha^2+4c\ne0$, there exists a Hopf
hypersurface $M$ with Hopf principal curvature $\alpha$, containing $\gamma$,
and for which $\gamma$ is a principal curve perpendicular to the structure vector field $W$
with principal curvature $k_1$.  Any other Hopf hypersurface with these properties
will coincide with $M$ on an open set containing $\gamma$.
\end{theorem}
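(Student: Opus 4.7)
The plan is to recast Theorem \ref{HopfIVP} as an initial value problem for an analytic exterior differential system on the unitary frame bundle of $\mt$, then invoke the Cartan--K\"ahler theorem.  The framed curve $\gamma$ together with the prescribed $\alpha$ and $k_1$ will determine an analytic 1-dimensional integral manifold, which Cartan--K\"ahler will extend to a 3-dimensional one that projects to the desired hypersurface.

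First, I would construct an adapted frame along $\gamma$ by setting $e_1 = T$, $e_2 = \JJ T$, $e_3 = -\JJ N$, and $e_4 = N$, so that $e_2 = \JJ e_1$, $e_4 = \JJ e_3$, and $(e_3,e_4)$ play the role of the structure-vector/normal pair of the putative hypersurface.  Since the torsion $\tau$ vanishes, the Frenet equations \eqref{Frenet} force the shape operator of any such hypersurface to satisfy $Ae_1 = k_1 e_1$ along $\gamma$, and the Hopf relation \eqref{basicHopf1} then singles out a unique candidate principal curvature $\nu = (k_1 \alpha + 2c)/(2k_1-\alpha)$ for $e_2 = \varphi e_1$.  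The denominator cannot vanish, because $\alpha^2+4c\ne 0$ precludes $\alpha/2$ from being a $W^\perp$-principal curvature of any Hopf hypersurface.

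Next, I would set up an EDS on $\mathcal F(\mt)\times\R$, with $\lambda$ the extra coordinate, generated by the semibasic 1-form $\omega^4$ together with $\omega^4_1-\lambda\omega^1$, $\omega^4_2-\nu(\lambda)\omega^2$, and $\omega^4_3-\alpha\omega^3$; any additional 1-forms coming from $d$ of the generators modulo the ideal must also be adjoined, and these encode the Codazzi identities.  Three-dimensional integral manifolds on which $\omega^1\wedge\omega^2\wedge\omega^3\ne 0$ project to Hopf hypersurfaces in $\mt$ with Hopf curvature $\alpha$ and the principal curvature structure required by the theorem.  The lift of $\gamma$ using the frame of the previous step, together with the assignment $\lambda = k_1(s)$, is an analytic 1-dimensional integral manifold of the system on which only $\omega^1$ is nonzero among $\omega^1,\omega^2,\omega^3$.

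Finally, I would apply Cartan--K\"ahler.  The key technical step is to verify involutivity via Cartan's test: one expects characters $s_1 = 1$, $s_2 = s_3 = 0$, reflecting the fact that a Hopf hypersurface of this type is parameterized locally by one arbitrary function of one variable (the principal curvature along an integral curve of $e_1$).  Carrying out this verification by using \eqref{ambientcurvature}, \eqref{basicHopf}, and the Codazzi equations to simplify $d\omega^4_i$ modulo the ideal is the main obstacle; it also establishes that the 1-dimensional lift of $\gamma$ is a regular integral manifold in the Cartan sense.  Once involutivity is in hand, Cartan--K\"ahler produces a unique analytic 3-dimensional integral manifold through that lift, which projects to an analytic Hopf hypersurface $M\subset\mt$ with the asserted properties; uniqueness on an open neighborhood of $\gamma$ follows directly from the uniqueness clause of Cartan--K\"ahler.
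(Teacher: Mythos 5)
Your overall strategy---lift $\gamma$ to the unitary frame bundle via its Frenet frame and apply Cartan--K\"ahler---is the same as the paper's, but the exterior differential system you set up is genuinely different, and the difference is where the argument breaks. The paper's system on $\F$ is generated by only \emph{two} 1-forms, $\w^4$ and $\w^4_3-\alpha\w^3$; the entries $\lambda,\mu,\nu$ of $A|_{W^\perp}$ enter only as parameters of integral elements, constrained by the single scalar relation \eqref{inteltq}, and the characteristic condition for an initial curve is \eqref{CV}, which for the Frenet lift ($a=1,b=0,p=k_1,q=0$) reads $2k_1-\alpha=0=k_1^2+c$ and can never hold when $\alpha^2+4c\ne0$. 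You instead add $\lambda$ as a coordinate and impose $\w^4_1=\lambda\w^1$, $\w^4_2=\nu(\lambda)\w^2$, i.e.\ you force $e_1,e_2$ to be principal \emph{everywhere}, with $\nu$ solved from \eqref{basicHopf1}. Two concrete problems result. First, the formula $\nu=(k_1\alpha+2c)/(2k_1-\alpha)$ requires $2k_1-\alpha\ne0$, and your justification is circular: $k_1$ is prescribed data, not a principal curvature of an already-existing hypersurface, so nothing in the hypotheses prevents it from crossing $\alpha/2$. Second, and fatally for the stated generality, your system degenerates wherever $\lambda=\nu(\lambda)$, i.e.\ wherever $k_1$ equals a root of $\lambda^2=\alpha\lambda+c$: computing the structure equations of your system, the polar matrix of a tangent vector has determinant proportional to $\lambda-\nu$, so along such points the Frenet lift of $\gamma$ is a characteristic (non-regular) integral curve and Cartan--K\"ahler does not apply---yet such $k_1$ (e.g.\ constant values producing type A hypersurfaces) are covered by the theorem and handled without difficulty by the paper's weaker system. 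Relatedly, the characters you ``expect'' are wrong: the last nonzero character is $2$, not $1$ (Hopf hypersurfaces with fixed $\alpha$ depend on two functions of one variable, since both the holomorphic curvature $k_0$ and the transverse curvature $k_1$ of the initial curve are free), and involutivity must actually be verified rather than asserted; also, one does not ``adjoin additional 1-forms coming from $d$ of the generators''---for a Pfaffian system one absorbs torsion or prolongs, and adjoining 1-forms would change the problem.

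The uniqueness clause also does not ``follow directly'' from Cartan--K\"ahler in your setup. Cartan--K\"ahler gives uniqueness of the integral manifold of \emph{your} system through the lifted curve; to conclude that an arbitrary Hopf hypersurface $M'$ with the stated properties coincides with $M$, you must realize $M'$ near $\gamma$ as an integral manifold of that same system through the same curve, i.e.\ equip $M'$ with a smooth unitary frame in which $e_1,e_2$ are principal and which restricts along $\gamma$ to the Frenet frame. Such a principal framing need not exist smoothly (nor be canonical) near points where $A|_{W^\perp}$ is a multiple of the identity---exactly the degenerate locus above---and you have not addressed the residual frame freedom at all. The paper circumvents both issues: its system does not demand principality of $e_1$ off $\gamma$, so \emph{any} adapted unitary frame of $M'$ yields an integral manifold, and the rotation freedom in $(e_1,e_2)$ is handled by observing that these directions are Cauchy characteristics, so that each Hopf hypersurface corresponds to a unique 4-dimensional integral manifold satisfying \eqref{4dindy} (Proposition \ref{applyCK}); uniqueness through $\Gamma$ then really is uniqueness of the hypersurface. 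To repair your proof you would either have to restrict the class of admissible $k_1$, or abandon the principal-frame rigidification and work, as the paper does, with the two-form ideal generated by $\w^4$ and $\w^4_3-\alpha\w^3$.
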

This result will be proved in \S\ref{Hopfmoving}.

The existence of a regular framed curve with $k_0,k_1$ and $\tau$ equal to any given smooth functions
can be shown by standard arguments about solutions of linear systems of ordinary differential equations (cf. Theorem 5.1 in \cite{MaedaCurve1}), and these
arguments carry over to the real-analytic category.  Thus, we can apply Theorem \ref{HopfIVP}
to produce a Hopf hypersurface with a zero torsion analytic curve as principal
curve, with \emph{any given analytic function} as principal curvature along this curve.

\section{Proofs using Exterior Differential Systems}\label{movingframes}

Let $\F$ be the unitary frame bundle of $\Mt=\CP^2$ or $\CH^2$, i.e., the bundle
whose fiber at a point $p$ is the set of
orthonormal frames $(e_1,e_2,e_3,e_4)$ satisfying \eqref{unitarycond}.
This is a principal sub-bundle of the full orthonormal frame bundle, and
has structure group $U(2)$.  Let $\w^i$ and $\w^i_j$, for $1\le i,j\le 4$, denote the
pullbacks of the canonical forms and Levi-Civita
connection forms from the full frame bundle.  If
$f=(e_1, e_2, e_3, e_4)$ is any local section of $\F$, then
the pullbacks of the $\w^i$ form a dual coframe, i.e.,
\begin{equation}\label{boilercanon}
e_j \intprod f^* \w^i = \delta^i_j.
\end{equation}
As well,
the connection forms have the property that
\begin{equation}\label{boilerconn}
\nat_\bv e_i = (\bv \intprod f^*\w^j_i)e_j
\end{equation}
for any tangent vector $\bv$.
These forms satisfy the usual structure equations
$$d\w^i = -\w^i_j \& \w^j, \qquad d\w^i_j = -\w^i_k \& \w^k_j + \Phi^i_j,$$
where $\Phi^i_j$ are the curvature 2-forms.
On the unitary frame bundle,
the connection forms $\w^i_j$ satisfy the additional linear relations
$$\w^3_1 = \w^4_2, \qquad \w^3_2 = -\w^4_1.$$
Thus, the canonical forms $\w^1, \w^2, \w^3, \w^4$ together with
the connection forms $\w^2_1, \w^4_1, \w^4_2, \w^4_3$ form a globally-defined coframe
on the 8-dimensional manifold $\F$.

The curvature forms are as follows:
\begin{align*}
\Phi^2_1 &= c (4 \w^2 \& \w^1 + 2 \w^4 \& \w^3), \\
\Phi^4_3 &= c( 4 \w^4 \& \w^3 +2\w^2 \& \w^1),\\
\Phi^4_1 &= \Phi^2_3 = c(\w^2 \& \w^3 + \w^4 \& \w^1), \\
\Phi^4_2 &= \Phi^3_1= c (\w^3 \& \w^1 + \w^4 \& \w^2). \\
\end{align*}

For a hypersurface $M \subset \Mt$, we say that a section $f:M \to \F\restr_M$
is an {\em adapted frame along $M$} if $e_4$ is normal to $M$.  It follows
from \eqref{boilercanon} that $f^* \w^4=0$, and it follows from \eqref{boilerconn}
that
\begin{equation}\label{omegaH}
f^* \w^4_i = h_{ij} f^* \w^j, \quad 1\le i,j\le 3,
\end{equation}
where $h_{ij}$ are the components of the shape operator with respect to
the tangential moving frame $(e_1, e_2, e_3)$.
Furthermore, if $(W,X,Y)$ is a local frame along $M$ as constructed
in \S\ref{RicciConditions}, then we may take $e_3= W$, $e_1=X$ and $e_2=Y$,
and the entries of $H$ are just the
entries of $A$ from \eqref{shapematrix} rearranged:
\begin{equation}\label{Hmatrix}
H = \begin{pmatrix} \lambda & \mu &\beta \\ \mu & \nu & 0 \\ \beta & 0 & \alpha \end{pmatrix}.
\end{equation}

\subsection{An initial value problem for Hopf hypersurfaces}\label{Hopfmoving}

From \eqref{boilerconn} and \eqref{Hmatrix}, it follows that an adapted unitary frame along
a Hopf hypersurface $M$ gives a section $f:M \to \F$ such that
$f^* \w^4 =0$ and $f^* (\w^4_3 - \alpha \w^3)=0$.  Thus, the image $\Sigma$
of $f$ is a three-dimensional submanifold along which the forms
$$\theta_1 = \w^4, \qquad \theta_2 = \w^4_3 - \alpha \w^3$$
pull back to be zero.  We let $\I$ be the exterior differential system
on $\F$ generated by these 1-forms (for a given value of the constant $\alpha$).
Then there is a one-to-one correspondence between Hopf hypersurfaces $M$
equipped with an adapted unitary frame and three-dimensional integral submanifolds of $\I$
satisfying the independence condition $\w^1\& \w^2 \& \w^3 \ne 0$.

To complete a set of algebraic generators for the ideal $\I$, we need to
compute the exterior derivatives
$$\begin{aligned} d\theta_1 &\equiv -\w^4_1 \& \w^1 -\w^4_2 \& \w^2,\\
d\theta_2 &\equiv 2(\w^4_1 \& \w^4_2 - c\w^1 \& \w^2) + \alpha (\w^4_2 \& \w^1 - \w^4_1 \& \w^2)
\end{aligned} \mod \theta_1, \theta_2.$$
Let $\Omega_1, \Omega_2$ be the 2-forms on the right hand sides of these equations.

For an exterior differential system, a subspace $E$ of the tangent space at a point in
the underlying manifold is an {\em integral element} if all differential
forms in the system restrict to be zero on $E$.   For example, let $u\in \F$ and let $\bv\in T_u \F$ be a nonzero vector; then $\bv$ spans a 1-dimensional integral element if and only
if $\bv\intprod \theta_1 = \bv\intprod \theta_2 = 0$.

We define the {\em polar space} of a $k$-dimensional integral element as follows.
\begin{defn}
Let $\{\e_1, \cdots, \e_k\}$ be a basis for an integral element $E$.  The {\em polar space}
$H(E)$ of $E$ is the set of all $\bw \in T_u \F$ such that $\psi(\bw, \e_1, \cdots, \e_k) = 0$
for all $(k+1)$-forms $\psi \in  \I$.
\end{defn}
In other words, $H(E)$ contains all possible enlargements of $E$ to
an integral element of one dimension larger (cf. \cite{cfb}, \S7.1)
For example, the polar space of the span of vector $\bv \in T_u \F$ is
\begin{equation}\label{polardef}
H(\bv) = \{ \theta_1, \theta_2, \bv \intprod \Omega_1, \bv\intprod \Omega_2\} ^\perp \subset T_u \F,
\end{equation}
where the $\perp$ sign indicates the subspace annihilated by the 1-forms in braces.

\begin{lemma}\label{Hlem} Let $\V^1_u \subset T_u \F$ be the set of vectors $\bv$ such that
$\bv\intprod \theta_1 = \bv\intprod \theta_2 = 0$ but $\bv\intprod (\w^1 \& \w^2 \& \w^3) \ne 0$.  Then
$\dim H(\bv)=4$ for an open dense subset of $\V^1_u$.
\end{lemma}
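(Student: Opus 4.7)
The plan is to reduce \eqref{polardef} to a concrete linear-algebra problem on a 6-dimensional affine parameter space, and then apply a simple genericity argument. Any $\bv\in T_u\F$ has components $v^1,\ldots,v^4$ with respect to the canonical forms $\w^1,\ldots,\w^4$ and components $v^2_1, v^4_1, v^4_2, v^4_3$ with respect to the four independent connection forms $\w^2_1,\w^4_1,\w^4_2,\w^4_3$. The conditions $\bv\intprod\theta_1 = \bv\intprod\theta_2 = 0$ translate into $v^4 = 0$ and $v^4_3 = \alpha\, v^3$, cutting $T_u\F$ down to a 6-dimensional affine subspace; the nondegeneracy condition $\bv\intprod(\w^1\&\w^2\&\w^3)\ne0$ then requires only that $(v^1,v^2,v^3)\ne0$.

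Next I would expand the two interior products directly. From $\Omega_1 = -\w^4_1\&\w^1 - \w^4_2\&\w^2$ and the formula for $\Omega_2$ given above, a routine computation yields
\[
\bv\intprod \Omega_1 = v^1\w^4_1 + v^2\w^4_2 - v^4_1\w^1 - v^4_2\w^2,
\]
\[
\bv\intprod \Omega_2 = (2cv^2 + \alpha v^4_2)\w^1 + (-2cv^1 - \alpha v^4_1)\w^2 + (\alpha v^2 - 2v^4_2)\w^4_1 + (2v^4_1 - \alpha v^1)\w^4_2.
\]
The key structural point is that both of these 1-forms lie in the 4-dimensional span of $\w^1,\w^2,\w^4_1,\w^4_2$, whereas $\theta_1=\w^4$ and $\theta_2=\w^4_3 - \alpha\w^3$ lie in the complementary span of $\w^3,\w^4,\w^4_3$. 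Hence the four 1-forms in \eqref{polardef} are linearly independent if and only if $\bv\intprod\Omega_1$ and $\bv\intprod\Omega_2$ are, in which case $\dim H(\bv) = 8 - 4 = 4$.

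Finally, the locus in $\V^1_u$ where $\bv\intprod\Omega_1$ and $\bv\intprod\Omega_2$ fail to be linearly independent is the common zero set of the $2\times 2$ minors of the $2\times 4$ coefficient matrix extracted from the display above; this is a real algebraic subvariety $\mathcal{Z}\subset\V^1_u$, hence closed. To show $\mathcal Z\ne\V^1_u$, it suffices to produce a single $\bv\in\V^1_u\setminus\mathcal{Z}$, and taking $v^1=1$ with all other components zero does the job: one obtains $\bv\intprod\Omega_1 = \w^4_1$ and $\bv\intprod\Omega_2 = -2c\w^2 - \alpha\w^4_2$, which are manifestly linearly independent since $c\ne0$. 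The complement $\V^1_u\setminus\mathcal{Z}$ is then the desired open dense subset on which $\dim H(\bv) = 4$. The only piece of the argument that demands any care is keeping track of signs and indices in the interior-product computations; once those are correct, the conclusion is a one-line rank argument.
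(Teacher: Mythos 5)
Your proof is correct and takes essentially the same route as the paper: both reduce $\dim H(\bv)$ to the rank of the same $2\times 4$ coefficient matrix expressing $\bv\intprod\Omega_1$ and $\bv\intprod\Omega_2$ in terms of $\w^1,\w^2,\w^4_1,\w^4_2$, after noting that $\theta_1,\theta_2$ involve only $\w^3,\w^4,\w^4_3$. The only difference is cosmetic: the paper writes out the degeneracy locus explicitly as the pair of equations \eqref{CV} (a by-product it reuses later, e.g.\ to verify that the Frenet lift in the proof of Theorem \ref{HopfIVP} is non-characteristic), whereas you get density by exhibiting a single non-characteristic vector and invoking that a proper real algebraic subvariety has empty interior.
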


For $\bv \in \V^1_u$, we will say that $\bv$ is {\em characteristic} if $\dim H(\bv) > 4$.

\begin{proof}
Because $T_u F$ is 8-dimensional, the dimension of $H(\bv)$ is 8 minus the dimension of the
span of the 1-forms in braces in \eqref{polardef}.  This in turn depends only on the
values of $\bv \intprod \Omega_1$ and $\bv \intprod \Omega_2$.
Suppose that
\begin{equation}\label{vintprod}
\bv \intprod \begin{pmatrix}\w^4_1 \\ \w^4_2\\ \w^1 \\ \w^2  \end{pmatrix} = \begin{bmatrix} p \\ q\\ a\\ b  \end{bmatrix}.
\end{equation}
Then
$$
\begin{pmatrix} \bv \intprod \Omega_1 \\ \bv \intprod \Omega_2 \end{pmatrix}
= \begin{bmatrix} a & b & -p & -q \\ -2q+\alpha b & 2p-\alpha a & 2cb+\alpha q & -(2ca+\alpha p)\end{bmatrix}
\begin{pmatrix}\w^4_1 \\ \w^4_2\\ \w^1 \\ \w^2  \end{pmatrix}.
$$
Let $R$ be the $2\times 4$ matrix on the right-hand side.  It is easy to check that $R$ has rank 2
unless
\begin{equation}\label{CV} 0 = 2(ap+bq)-\alpha (a^2+b^2) = p^2+q^2+c(a^2+b^2).
\end{equation}
These equations fail to hold simultaneously on an open dense subset of $\V^1_u$.  For vectors in this set,
$\bv \intprod \Omega_1$
and $\bv \intprod \Omega_2$ are linearly independent combinations of $\w^4_1, \w^4_2, \w^1, \w^2$,
and hence $\dim H(\bv)=4$.
\end{proof}

\begin{remark}It is evident from the equations \eqref{CV} that when $c>0$ the only characteristic
vectors are those for which $a=b=p=q=0$, forming a 2-dimensional plane
in $T_u F$.  (The same is true if $c<0$ and $|\alpha|> 2/r$.)
However, when $c<0$ and $|\alpha|\le 2/r$
 the set of characteristic vectors is the union of the 2-dimensional plane
 and a 4-dimensional submanifold, parametrized by $a$ and $b$
(not both zero) and the values of $\bv \intprod \w^3$ and $\bv \intprod \w^2_1$.
\end{remark}

As stated above, a Hopf hypersurface with an adapted unitary frame corresponds
exactly to a 3-dimensional integral submanifold $\Sigma \subset \F$ along which
the independence condition $\w^1 \& \w^2 \& \w^3 \ne 0$ holds.  However, an adapted framing
$(e_1,\ldots, e_4)$ along a given Hopf hypersurface can always be modified by a
rotation
$e_1 \mapsto \cos \psi\ e_1 + \sin\psi\ e_2, e_2\mapsto -\sin\psi\ e_1 + \cos \psi\ e_2$
for an arbitrary angle $\psi$.  Under such changes, the corresponding
section of $\F\restr_M$ moves along circle within the fibers of $\F$.  On these circles the
1-form $\w^2_1$ restricts to be nonzero but the remaining 1-forms
$\w^1, \w^2, \w^3, \w^4, \w^4_1, \w^4_2, \w^4_3$ of the standard coframe
restrict to be zero.  Vector fields tangent to these circles
are {\em Cauchy characteristic vectors} for $\I$ (see \S6.1 in \cite{cfb} for
more information).  In particular, if $\Sigma^3$ is an integral
manifold satisfying $\w^1 \& \w^2 \& \w^3 \ne 0$, then $\Sigma$ is transverse
to the Cauchy characteristic circles, and the union of the
circles through $\Sigma$ is a 4-dimensional integral manifold of $\I$.
Thus, $M$ is associated to a {\em unique} 4-dimensional integral submanifold of $\I$ satisfying the independence
condition
\begin{equation}\w^1 \& \w^2 \& \w^3 \& \w^2_1 \ne 0.\label{4dindy}
\end{equation}

\begin{prop}\label{applyCK} Let $\alpha$ be any real number satisfying $\alpha^2+4c\ne 0$, and
let $\I$ be the exterior differential system
on $\F$ generated by $\theta_1$ and $\theta_2$.
Let $\Gamma: I \hookrightarrow \F$ be a real-analytic curve such that $\Gamma'(t) \in \V^1_{\Gamma(t)}$
for all $t \in I$ and $\Gamma'(t)$ is never characteristic.
Then there exists a unique real-analytic submanifold $\Sigma^3 \subset \F$ that contains
$\Gamma$ and is an integral submanifold of $\I$ satisfying the independence condition
\eqref{4dindy}.
\end{prop}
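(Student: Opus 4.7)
The plan is to prove this proposition as a direct application of the Cartan-K\"ahler theorem (see \cite{cfb}). The hypotheses on $\Gamma$ are carefully chosen so that K\"ahler-regularity of the initial 1-dimensional integral manifold holds, after which the Cartan-K\"ahler machinery produces the extension.

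First, I would observe that the hypothesis $\Gamma'(t)\in\V^1_{\Gamma(t)}$ amounts to $\Gamma^*\theta_1=\Gamma^*\theta_2=0$, so $\Gamma$ is a real-analytic 1-dimensional integral manifold of $\I$. By Lemma \ref{Hlem}, the non-characteristic assumption gives $\dim H(T_{\Gamma(t)}\Gamma)=4$ at every $t$; since the non-characteristic condition \eqref{CV} is open and $\Gamma$ is real-analytic, this dimension is constant along $\Gamma$, so $T_{\Gamma(t)}\Gamma$ is K\"ahler-regular.

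Next, to extend $\Gamma$ to $\Sigma$, I would exploit the Cauchy characteristic structure of $\I$. The vector field dual to $\w^2_1$ is a Cauchy characteristic: since $\w^2_1$ appears neither in $\theta_1,\theta_2$ nor in $d\theta_1,d\theta_2$ modulo $\I$, it lies in $H(E)$ for every integral element $E$. It is transverse to $\Gamma$ (by the $\V^1$ condition together with the non-characteristic assumption), so adjoining it to $T_{\Gamma(t)}\Gamma$ yields a 2-dimensional integral element. Further directions needed to reach $\Sigma$ are drawn from within $H(T_{\Gamma(t)}\Gamma)$, whose 4-dimensional size matches the Cartan-K\"ahler dimension count; the independence condition \eqref{4dindy} then holds because $\Gamma'(t)$ contributes a nontrivial $\w^1\wedge\w^2\wedge\w^3$-component while the Cauchy characteristic provides $\w^2_1$.

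The main obstacle is verifying K\"ahler-regularity at each higher level of the flag of integral elements extending $T_{\Gamma(t)}\Gamma$ up to $T_{\Gamma(t)}\Sigma$: one must compute the polar spaces of the intermediate 2- and 3-dimensional integral elements and confirm that they drop in dimension by the expected amount at each step, so that Cartan's involutivity test is satisfied and the extension is uniquely determined at each stage. This amounts to repeating the matrix-rank computation of Lemma \ref{Hlem} at each stage, and the role of the assumption $\alpha^2+4c\ne 0$ is precisely to rule out the degenerate case (noted in the remark following Lemma \ref{Hlem}) in which additional characteristic directions appear. Once involutivity is confirmed, Cartan-K\"ahler yields a unique real-analytic extension, determined entirely by $\Gamma$ together with the Cauchy characteristic flow.
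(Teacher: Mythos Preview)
Your overall strategy---Cartan--K\"ahler applied to a flag of integral elements starting at $\Gamma'(t)$---is exactly the paper's approach, and your reading of the hypotheses on $\Gamma$ is correct. However, you misidentify where the actual work lies.

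You say the main obstacle is computing the polar spaces of the intermediate $E_2,E_3$ and confirming they ``drop by the expected amount,'' to be done by ``repeating the matrix-rank computation of Lemma~\ref{Hlem} at each stage.'' But there is nothing to compute at those levels: since $\I$ has no algebraic generators of degree $\ge 3$, one has $H(E_2)=H(E_3)=H(E_1)$ automatically, so $c_2=c_3=c_1=4$. No rank analysis is possible or needed there. What you have omitted is the other half of Cartan's Test: one must compute the codimension of the variety of admissible integral $4$-planes inside the Grassmannian of $4$-planes in $T_u\F$, and verify that it equals $c_0+c_1+c_2+c_3=2+4+4+4=14$. The paper does this by parametrizing such elements via $\w^4_1=\lambda\w^1+\mu\w^2$, $\w^4_2=\mu\w^1+\nu\w^2$ with $(\lambda,\mu,\nu)$ lying on the quadric $2(\lambda\nu-\mu^2-c)-\alpha(\lambda+\nu)=0$ coming from $\Omega_2$, and checking this locus is a smooth surface.

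That smoothness check is where $\alpha^2+4c\ne 0$ actually enters: the quadric fails to be smooth only at $\lambda=\nu=\alpha/2$, $\mu=0$, and this point satisfies the equation precisely when $\alpha^2+4c=0$. Your attribution of the hypothesis to the characteristic-direction remark after Lemma~\ref{Hlem} is not the right mechanism; the explicit non-characteristic assumption on $\Gamma'$ already handles the level-$1$ polar space independently of $\alpha$.
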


\begin{proof} We will apply the Cartan-K\"ahler Theorem and Cartan's Test for involutivity.
(For Cartan's Test, see Theorem 7.4.1 in \cite{cfb}; for Cartan-K\"ahler see
Theorem III.2.2 in \cite{BCG3}.)
This will first require investigating the equations that define
the set of 4-dimensional integral elements of $\I$.

At a point $u\in \F$, a 4-dimensional subspace $E\subset T_u \F$
is by definition an integral element of $\I$ if all differential forms in $\I$ restrict to be zero
on $E$.  (We will consider only those 4-planes $E$ that satisfy the independence condition \eqref{4dindy}.)
In order that the algebraic generators of $\I$ vanish on $E$, the restrictions of the 1-forms $\w^4, \w^4_1, \w^4_2, \w^4_3$ to $E$ must satisfy
\begin{equation}\label{omegavals}
\w^4=0,\quad \w^4_3 = \alpha \w^3,\quad
\w^4_1 = \lambda \w^1 + \mu \w^2, \quad \w^4_2 = \mu \w^1 +\nu  \w^2,
\end{equation}
for values of $\lambda, \mu, \nu$ such that
\begin{equation}\label{inteltq}
2(\lambda \nu-\mu^2 -c)-\alpha(\lambda+\nu)=0.
\end{equation}
This equation is obtained by substituting \eqref{omegavals} in $\Omega_2$.

Using the above equations, it is easy to check that the set of integral 4-planes
is a smooth 2-dimensional submanifold of the Grassmannian of 4-dimensional subspaces of $T_u \F$ except
at points where $\lambda=\nu=\alpha/2$ and $\mu=0$.  In that case \eqref{inteltq}
implies that $\alpha^2+4c=0$.  Thus, our assumption about $\alpha$ guarantees that the
set of integral 4-planes satisfying the independence condition is a smooth submanifold.
Since the Grassmannian has dimension 16, the submanifold has codimension 14.

Let $u = \Gamma(t)$ for an arbitrary $t$. Because $\bv=\Gamma'(t)$ is non-characteristic and $\I$ has
no algebraic generators of degree higher than two, $E=H(\bv)$ is
the unique integral 4-plane containing $\Gamma'(t)$.

Cartan's Test for involutivity
can be formulated in terms of the codimensions of the polar spaces for a flag
of integral elements terminating in $E$.  To this end, let
$E_0 = \{0\}$, $E_1 = \{\bv\}$, let $E_2 \subset E_3$ be any 2- and 3-dimensional
integral elements contained in $E$, and let $c_k$ denote the codimension
of $H(E_k)$ for $k=0, \ldots 3$.  Then $c_0=2$ (because there are only two 1-forms in the
ideal), $c_1=4$ as computed above, and $c_2=c_3=4$ because $\I$ has
no additional algebraic generators.  Then, because
$c_0+ c_1+c_2+c_3=14$
coincides with the codimension of the set of integral 4-planes, $\I$
is involutive and in particular the members of the flag are K\"ahler-regular integral elements.
Successive applications of the Cartan-K\"ahler Theorem give the existence and
uniqueness of $\Sigma$ containing $\Gamma$.
\end{proof}

Note that the image under $\pi:\F \to \Mt$ of the four-dimensional integral
manifold constructed in Theorem \ref{applyCK} is a three-dimensional Hopf hypersurface.
In the remainder of this section we will solve a geometric initial value problem
for such hypersurfaces.

\begin{proof}[Proof of Theorem \ref{HopfIVP}]
Let $\Gamma:I\to \F$ be the lift of $\gamma$ provided by
the Frenet frame vectors satisfying \eqref{Frenet}, with $e_1=T$, $e_2=\JJ T$, $e_4=N$
and $e_3=-\JJ N$.  We will first show that
$\Gamma$ satisfies the conditions of Proposition \ref{applyCK}.

Because the frame vectors $e_2,e_3,e_4$ are orthogonal to $\gamma'$,
we have $\Gamma^* \w^2=\Gamma^*\w^3=\Gamma^*\w^4=0$.  Next, the Frenet equations \eqref{Frenet}
imply that
\begin{equation}\label{estar}
\Gamma^*\w^4_1=\kappa_1\,ds, \qquad \Gamma^* \w^4_2=\Gamma^*\w^4_3=0
\end{equation}
  (Here, $s$ is
an arclength coordinate along $\gamma$.)
In particular, $\Gamma$ is an integral curve of the 1-forms $\theta_1=\w^4$ and
$\theta_2=\w^4_3 -\alpha \w^3$.  If we set $\bv = \Gamma'(s)$ in \eqref{vintprod}
then $a=1$, $b=0$, $p=\kappa_1$, $q=0$, and the characteristic equations
\eqref{CV} take the form $0=2\kappa_1 -\alpha = \kappa_1^2+c$, which
cannot simultaneously hold because of our assumption that $\alpha^2 +4c \ne0$.
Thus, $\Gamma$ is not characteristic, and by Proposition \ref{applyCK} there exists a unique integral
manifold $\Sigma^4$ passing through $\Gamma$.  Then $M=\pi(\Sigma)$ is a
Hopf hypersurface containing $\gamma$.  Because $\Gamma^*\w^3=0$,
$\gamma$ is tangent to the holomorphic distribution on $M$.
Moreover, \eqref{estar} shows that $\nat_{e_1} e_4 = -\kappa_1 e_1$
along $\gamma$, and thus
$\gamma$ is a principal curve in $M$ with principal curvature $\kappa_1$,

Conversely, suppose $M$ is a Hopf hypersurface containing $\gamma$,
in which $\gamma$ is tangent to the holomorphic distribution.
Then there exists a unitary frame along $M$, in an open neighborhood
of $\gamma$, such that $e_1$ is tangent to $\gamma$ and $e_3$ is the structure
vector.  Moreover, if $\gamma$ is principal in $M$, then
$\nat_{\gamma'} e_4$ must be a multiple of $e_1$.  Thus,
the covariant derivatives of the frame vectors with respect to $\gamma'$
satisfy the Frenet equations with $\tau=0$.  Hence, the unitary
frame constructed along $M$, when viewed as a submanifold of $F$, passes through the curve $\Gamma$ constructed above,
and $M$ must be the image of the unique integral manifold $\Sigma^4$ through $\Gamma$.
\end{proof}

\subsection{Non-Hopf hypersurfaces}
In this section we will investigate the two possible kinds of shape operator, given by Proposition \ref{etcProp},
for non-Hopf hypersurfaces in $\Mt=\CP^2$ or $\CH^2$ that are both *-Einstein and pseudo-Ryan.  The two lemmas given in this
section will prove Proposition \ref{nonHopfpseudoR}.  For a more detailed explanation of the method of proof used here,
see \S 6 of \cite{IveyRyan2}.

\begin{lemma} There are no hypersurfaces in $\Mt$ that satisfy condition (i) in Proposition \ref{etcProp}.
\end{lemma}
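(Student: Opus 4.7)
The plan is to work on the unitary frame bundle following the approach of \S\ref{movingframes}. For a hypothetical hypersurface $M$ satisfying condition (i), pick an adapted frame with $e_3 = W$, $e_1 = X$ the unit vector in $W^\perp$ with $AW = \a W + \b X$, and $e_2 = Y = \varphi X$. Then \eqref{Hmatrix} specializes to
$$H = \begin{pmatrix} 0 & 0 & \b \\ 0 & \nu & 0 \\ \b & 0 & \a \end{pmatrix},$$
so, in addition to $\w^4 = 0$, the adapted frame satisfies
$$\w^4_1 = \b\w^3, \qquad \w^4_2 = \nu\w^2, \qquad \w^4_3 = \b\w^1 + \a\w^3,$$
together with the pointwise constraint $\b^2(\nu^2 - 4c) = 4c\a\nu$ and $\b\nu \ne 0$.

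The first step is to extract the Codazzi equations by computing $d\w^4_i$ in two ways: from the structure equation $d\w^4_i = -\w^4_j \& \w^j_i + \Phi^4_i$, and by direct differentiation of the expressions above, using $\w^3_1 = \w^4_2$ and $\w^3_2 = -\w^4_1$ together with the explicit curvature 2-forms restricted to $\w^4 = 0$. Expanding the remaining connection form in the coframe as $\w^2_1 = p_1\w^1 + p_2\w^2 + p_3\w^3$, each of the three Codazzi identities gives a linear system whose coefficients on the basis wedges $\w^i \& \w^j$ yield explicit formulas for the directional derivatives of $\a, \b, \nu$ in terms of $\a, \b, \nu, c$ and $p_1, p_2, p_3$. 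For instance, the Codazzi identity derived from $\w^4_2 = \nu\w^2$ should quickly give relations like $\nu_1 = -\nu p_2$, $\nu_3 = \b p_2$, and $\b p_1 + \nu p_3 = \b^2 - c$, and the other two identities determine the remaining components of $d\a, d\b, d\nu$.

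The second step is to exterior differentiate the algebraic constraint to obtain
$$2\b(\nu^2 - 4c)\,d\b + 2\b^2\nu\,d\nu = 4c\,\nu\,d\a + 4c\,\a\,d\nu,$$
and substitute the Codazzi-derived formulas for $d\a, d\b, d\nu$. This produces a single 1-form identity on $M$ whose coefficients in the basis $\w^1,\w^2,\w^3$ must all vanish. The strategy is to show that these forced polynomial equations in $\a,\b,\nu,c$ and $p_1,p_2,p_3$, combined with the original constraint, are inconsistent with $\b\nu c \ne 0$. In favorable coefficients the $p_i$ terms should drop out or be eliminable, leaving a polynomial identity in $\a,\b,\nu,c$ alone that has no solution under the stated non-vanishing assumptions.

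The main obstacle I expect is bookkeeping rather than anything conceptual: the Codazzi system couples $d\a, d\b, d\nu$ with the three components of $\w^2_1$, so one must carefully isolate independent relations. A likely efficient route is to start with the Codazzi identity from $\w^4_2 = \nu \w^2$, whose simplicity gives tight constraints on some $p_i$ and the first derivatives of $\nu$; propagating these into the remaining two Codazzi identities and then into the differentiated algebraic constraint should reduce the final contradiction to a manageable polynomial identity in $\a,\b,\nu$ and $c$.
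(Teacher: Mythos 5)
Your setup is sound and matches the paper's: the specialized shape matrix, the relations $\w^4_1=\b\w^3$, $\w^4_2=\nu\w^2$, $\w^4_3=\b\w^1+\a\w^3$, and the differentiated constraint are all correct, and extracting the Codazzi relations by comparing $d\w^4_i$ computed two ways is exactly how the paper's tableau \eqref{case1tableau} arises. The gap is in the final step: the inconsistency you hope to find at this stage does not exist. The Codazzi equations together with the differentiated form of \eqref{firstoption} form a system of $12$ scalar equations in the $12$ unknowns (the nine directional derivatives of $\a,\b,\nu$ and your $p_1,p_2,p_3$), but this system is \emph{consistent} and of rank at most $11$: its solution set at each point is a one-parameter family. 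This is precisely what the paper's computation shows — after absorbing $\a=\b^2(\nu^2-4c)/(4c\nu)$, the vanishing of the 2-forms in \eqref{case1tableau} only forces $\pi_1,\pi_2,\pi_3$ (which carry $\w^2_1$, $d\b$, $d\nu$) to be specific multiples of $\w^2$, $\w^3$, and $\w^1-(\b/\nu)\w^3$ respectively, all proportional to a single undetermined function $\rho$. So substituting your Codazzi-derived derivatives into the differentiated constraint will close up identically (modulo that free parameter), and no polynomial contradiction in $\a,\b,\nu,c$ falls out.

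To finish, one must prolong: introduce $\rho$ as a new unknown and differentiate a second time, using $d^2\b=d^2\nu=0$ and the structure equation for $d\w^2_1$ (i.e., the Gauss/curvature input, which your plan never invokes). The resulting integrability conditions force either $\rho\equiv 0$ on an open set or $\b,\nu$ locally constant, and \emph{each} branch requires yet another differentiation to reach a contradiction: when $\rho=0$, differentiating the now-explicit formulas for $d\b,d\nu,\w^2_1$ yields $\b^2+c=\nu^2-4c=0$, which is impossible; when $\b,\nu$ are constants, differentiating the restricted system produces two numerators that cannot vanish simultaneously. (One must also separately handle the degenerate locus $\nu^2+4c=0$, where some of the intermediate expressions are not defined.) So the missing ingredient is not bookkeeping but the second- and third-order integrability analysis with its case split; as written, your argument stalls exactly where the one-parameter freedom $\rho$ appears.
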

\begin{proof}  Using \eqref{omegaH} and \eqref{Hmatrix}, we see that adapted frames along
such a hypersurface correspond to integral 3-manifolds of the Pfaffian system $\I$ generated
differentially by the 1-forms
$$\theta_0 = \w^4, \qquad
\theta_1 = \w^4_1 - \beta \w^3, \qquad
\theta_2 = \w^4_2 - \nu \w^2, \qquad
\theta_3 = \w^4_3 - \beta \w^1 - \alpha \w^3.
$$
We define this exterior differential system on $\F \times \R^2$,
with $\beta,\nu$ used as coordinates on the second factor, and $\alpha$
given in terms of these by solving \eqref{firstoption}:
$$\alpha = \dfrac{\beta^2(\nu^2 - 4c)}{4c \nu}.$$
We restrict to the open subset on which $\beta$ and $\nu$ are
both nonzero, and take the usual independence condition.

We compute the exterior derivatives of these generator
1-forms modulo themselves.  As usual,
$d\theta_0 \equiv 0$ modulo $\theta_0, \ldots, \theta_3$,
while
\begin{equation}\label{case1tableau}
d\begin{bmatrix} \theta_1 \\ \theta_2 \\ \theta_3+\dfrac{\beta}{\nu}\theta_1 \end{bmatrix}
\equiv -\begin{bmatrix}
0 & \pi_1 & \pi_2 \\
\pi_1 & \pi_3 & -(\beta/\nu) \pi_1 \\
\pi_2 & 0  &
\dfrac{\beta}{4c \nu^2}\left(2\nu(\nu^2-2c)\pi_2 + \beta(\nu^2+4c)\pi_3\right)
\end{bmatrix}
\& \begin{bmatrix}\w^1 \\ \w^2 \\ \w^3 \end{bmatrix}
\tmod \theta_0, \ldots, \theta_3,
\end{equation}
where
\begin{align*}
\pi_1 &= -\nu\, \w^2_1 + 2\beta\nu\,\w^1-(\beta^2+c)\w^3,\\
\pi_2 &= d\beta - \dfrac{\beta^2(\nu^2+4c)+8c^2}{4c}\,\w^2,\\
\pi_3 &= d\nu +\dfrac{\nu(\beta^2\nu^2(\nu^2-2c) + 8c^2(4\nu^2 - 3c))}{2c\beta(\nu^2+4c)}\,\w^2.
\end{align*}
In this computation, we further restrict to the open
subset where $\nu^2+4c\ne 0$.  (For any solution, this
condition will either hold on an open subset, or $\nu$
will be locally constant; we will consider the latter possibility
below.)

Inspecting the generator 2-forms given by \eqref{case1tableau}
shows that, on any solution, $\pi_2$ will be a multiple of $\w^3$,
$\pi_1$ will be a multiple of $\w^2$, and
$\pi_3$ will be a multiple of $\w^1-(\beta/\nu)\w^3$,
and furthermore each of these multiples determines the others.
More precisely, there will be a function $\rho$ such that
the following 1-forms vanish:
\begin{align*}\theta_4 &= \pi_1 - \rho\,\w^2,\\
\theta_5 &=\pi_2 - \dfrac{\rho\beta^2(\nu^2+4c)}{4c \nu^2}\,\w^3,\\
\theta_6 &=\pi_3 - \rho(\w^1 -(\beta/\nu)\w^3).
\end{align*}
To solve for this function, we add $\rho$ as a new coordinate,
and define the 1-forms $\theta_4, \theta_5, \theta_6$
on the open subset of $\F\times \R^3$ where the
nonzero conditions on $\beta,\nu$ hold.  The Pfaffian
system generated by $\theta_0,\ldots,\theta_6$ is the prolongation of
$\I$.

We compute the exterior derivatives of the new 1-forms
of the prolongation.  In particular, we find
\begin{align*}
d \theta_4 \& \w^2 &\equiv \rho \dfrac{(\beta^2\nu^2-4c^2)}{2c\nu}
\,\w^1 \& \w^2 \& \w^3,\\
d \theta_6 \& (\w^1 -(\beta/\nu)\w^3) &\equiv
\rho \dfrac{\beta^2\nu^4(8c-\nu^2) + 8c^2(3\nu^4+42c\nu^2-32c^2)}{8c^2\nu (\nu^2+4c)}
\,\w^1 \& \w^2 \& \w^3
\end{align*}
modulo $\theta_0, \ldots, \theta_6$.
Because of our independence condition, at any point of $M$ either $\rho$ vanishes
or both polynomials in $\beta$ and $\nu$ in the numerators on the right-hand side vanish;
moreover, one alternative or the other must hold on an
open subset of $M$.  Note that in the latter case
$\beta$ and $\nu$ must be locally constant.

Consider first the case where $\rho$ vanishes identically
on an open subset.  Then the 1-forms $\pi_1, \pi_2, \pi_3$
all vanish, as do their exterior derivatives.  We compute
\begin{align*}
\w^3 \& d\pi_1 &\equiv -\dfrac{\nu}{2c}
\left[\beta^2(\nu^2-2c)+2c^2\right]\,\w^1 \& \w^2 \& \w^3,\\
d\pi_2 &\equiv \dfrac{\beta^2+c}{4c\nu}\left[ \beta^2(\nu^2+4c)+8c^2\right]\,\w^1 \& \w^3
\end{align*}
modulo $\theta_0,\ldots,\theta_3,\pi_1, \pi_2, \pi_3$.
The vanishing of the expressions on the right of both equations implies that $\beta^2+c=\nu^2-4c=0$,
which is impossible.

Thus we may work in a small open set where $\rho\ne 0$ and so $\beta$, $\nu$, and $\alpha$ are constant.
We restrict $\I$ to a submanifold where $\beta$ and $\nu$ are nonzero constants and compute
$$d\left(\theta_3+\dfrac{\beta}{\nu}\theta_1\right)
\equiv -\dfrac{\beta^2(\nu^2+4c)+8c^2}{4c} \w^1 \& \w^2
+ \beta\, \dfrac{\beta^2(\nu^2-2c)+2c^2-6c\nu^2}{2c\nu}\w^2 \& \w^3
$$
modulo $\theta_0, \ldots,\theta_3$.  It is easy to check that the numerators of the terms
on the right cannot simultaneously vanish.  This is a contradiction.
\end{proof}

\begin{lemma}
Any hypersurface in $\Mt$ that satisfies condition (ii) in Proposition \ref{etcProp}
must have $\alpha, \beta, \lambda$ locally constant.
\end{lemma}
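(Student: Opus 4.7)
The plan is to adapt the exterior differential systems strategy from the preceding lemma. Adapted unitary frames along hypersurfaces satisfying condition (ii) correspond to integral 3-manifolds of a Pfaffian system $\I$ on $\F\times \R^2$, with $(\beta,\nu)$ as fiber coordinates, $\sigma$ a fixed nonzero constant, $\lambda = -\sigma/\nu$, and $\alpha$ expressed as a rational function of $\beta,\nu$ by solving \eqref{secondoption} for $\alpha$. On the open set where $\nu$, $\nu^2+\sigma$, and $4c-\sigma$ are all nonzero, $\I$ is generated by the 1-forms
$$\theta_0 = \w^4, \quad \theta_1 = \w^4_1 - \lambda\w^1 - \beta\w^3, \quad \theta_2 = \w^4_2 - \nu\w^2, \quad \theta_3 = \w^4_3 - \beta\w^1 - \alpha\w^3,$$
subject to the usual independence condition $\w^1\&\w^2\&\w^3 \ne 0$.

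First I would compute $d\theta_1,d\theta_2,d\theta_3$ modulo $\theta_0,\ldots,\theta_3$ using the ambient curvature forms, and absorb the torsion into three new 1-forms: $\pi_1$ built from $\w^2_1$ and $\w^1,\w^2,\w^3$, $\pi_2$ built from $d\beta$, and $\pi_3$ built from $d\nu$, entirely analogous to those in the previous lemma. Inspecting the 2-form tableau, the vanishing of the generator 2-forms on an integral 3-manifold forces each $\pi_i$ to be a multiple of a specified 1-form in $\operatorname{span}\{\w^1,\w^2,\w^3\}$, with a single common coefficient function $\rho$. Introducing $\rho$ as an additional coordinate produces a prolongation of $\I$ with new generator 1-forms $\theta_4,\theta_5,\theta_6$ on $\F\times\R^3$.

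The second step is to compute $d\theta_4,d\theta_5,d\theta_6$ modulo the full prolonged ideal and extract the coefficients of $\w^1\&\w^2\&\w^3$ in suitable wedges (as done for $d\theta_4\&\w^2$ and $d\theta_6\&(\w^1-(\beta/\nu)\w^3)$ in the preceding lemma). Each such coefficient will factor as $\rho$ times a polynomial $P_k(\beta,\nu)$ depending on $c$ and $\sigma$. It suffices to verify that the $P_k$ have no common zero locus of codimension less than $2$ in the $(\beta,\nu)$-plane; granted this, on any solution either $\rho\equiv 0$ on an open set, or $\beta$ and $\nu$ are already locally constant. In the first alternative, the vanishing of $\pi_2$ and $\pi_3$ gives $d\beta=d\nu=0$ directly; in the second it is immediate. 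Hence in every case $\beta$ and $\nu$, and therefore $\lambda=-\sigma/\nu$ and $\alpha$ (a rational function of $\beta,\nu$), are locally constant. The main obstacle is the bookkeeping: the polynomials $P_k$ arising from \eqref{secondoption} are considerably more complicated than in the preceding lemma, and the three degenerate loci $\nu=0$, $\nu^2+\sigma=0$, and $4c-\sigma=0$ excluded at the outset must each be dispatched by a separate direct computation (the last of which is already excluded by the final remark in the proof of Proposition \ref{etcProp}). Once these polynomial incompatibilities are verified, the lemma follows.
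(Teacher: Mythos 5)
Your overall framework is the same as the paper's (a Pfaffian system on a frame bundle extended by the shape-operator entries, with the constraint \eqref{secondoption} used to eliminate or restrict $\alpha$, prolongation by a single function $\rho$, and polynomial integrability conditions), but there is a genuine gap in your treatment of the branch $\rho\equiv 0$. You claim that ``the vanishing of $\pi_2$ and $\pi_3$ gives $d\beta=d\nu=0$ directly.'' It does not: the forms $\pi_i$ are not $d\beta$ and $d\nu$ themselves but $d\beta$ and $d\nu$ minus absorbed torsion terms proportional to $\w^2$ (these terms encode Codazzi-type identities and are nonzero polynomials in $\beta,\nu$). So $\rho\equiv 0$ only says that $d\beta$ and $d\nu$ are prescribed multiples of $\w^2$, i.e.\ the shape-operator entries may still vary along the $e_2$-direction. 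This is exactly the hard case in the paper: there one must differentiate the relations $\pi_i=0$ themselves (compute $d\pi_1$, $d\pi_3$ modulo the ideal) to obtain the further conditions \eqref{ginger}--\eqref{fred}, split into the subcases $W=0$ or both bracketed polynomials zero, use resultants in $\beta$ to force $\nu$ locally constant, and then, with $\nu$ constant, extract one more integrability condition from the restricted system to force $\beta$ constant. None of this appears in your proposal, and without it the conclusion does not follow.

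Two smaller points. First, even in your $\rho\ne 0$ branch, the assertion that the polynomials $P_k(\beta,\nu)$ have a common zero locus of codimension $2$ is precisely what must be verified; in the paper the two conditions obtained from \eqref{denument} and \eqref{dew21ment} are each linear in $\beta^2$, and what they directly yield is only that $\nu$ is locally constant -- the constancy of $\beta$ still comes from the separate final computation with $\nu$ frozen, so you need that last step in any case. Second, your list of degenerate loci is slightly off: $\nu\ne 0$, $\nu^2+\sigma\ne 0$ and $4c-\sigma\ne 0$ are already guaranteed by condition (ii) and by \eqref{secondoption} itself, whereas the locus you actually must treat separately is $\nu^2+4c=0$ (it appears in the denominators of the absorbed torsion, e.g.\ in $Z$); on that locus $\nu$ is constant from the start and one must go straight to the final argument that pins down $\beta$.
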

\begin{proof} Let $U\subset \F\times \R^3$ be the open subset
where the coordinates $\alpha, \beta, \nu$ on the second factor satisfy
$\beta \ne 0$ and $\nu \ne0$.
On $U$, let $\I$ be the Pfaffian system generated by the 1-forms
$$\theta_0 = \w^4, \quad \theta_1 = \w^4_1+(\sigma/\nu)\w^1 - \beta \w^3, \quad
\theta_2 = \w^4_2 - \nu \w^2, \quad
\theta_3 = \w^4_3 - \beta \w^1 - \alpha \w^3,$$
for a nonzero constant $\sigma$.
Then an adapted framing along a hypersurface $M$ satisfying condition (ii), for the given $\sigma$,
generates an integral submanifold $\Sigma^3$ of $\I$.  Moreover, $\Sigma$ will lie inside
the submanifold $V \subset U$ determined by imposing \eqref{secondoption} on the coordinates.

We will first examine the structure equations of $\I$ on $U$, later passing to
the restriction of $\I$ to V.
We assume that $\nu^2+4c \ne 0$ and $\nu^2+\sigma\ne 0$ on an open subset of $\Sigma$; we will address the case where $\nu$
is locally constant later.  (Note that $\nu^2+\sigma$ cannot vanish on $V$, since substituting
$\nu^2=-\sigma$ in \eqref{secondoption} implies that $\beta=0$.)

The exterior derivatives of the generators of $\I$ satisfy $d\theta_0 \equiv 0$ and
\begin{equation}\label{case2tableau}
d\begin{bmatrix} \theta_1 \\ \theta_2 \\ \theta_3 \end{bmatrix}
\equiv -\begin{bmatrix}
\dfrac{\sigma}{\nu^2} \pi_1 & -\dfrac{\nu^2+\sigma}{\beta\nu}\pi_2 & \pi_3 \\
-\dfrac{\nu^2+\sigma}{\beta\nu}\pi_2 & \pi_1 & \pi_2 \\
\pi_3 & \pi_2  & \pi_4
\end{bmatrix}
\& \begin{bmatrix}\w^1 \\ \w^2 \\ \w^3 \end{bmatrix}
\tmod \theta_0, \ldots, \theta_3,
\end{equation}
where
\begin{align*}
\pi_1 &= d\nu -\dfrac{\beta^2\nu(\nu^2-2\sigma)+(\nu^2+\sigma)((c-Z)\nu + \sigma(\nu-\alpha))}{\sigma \beta}\w^2,\\
\pi_2 &= \beta \w^2_1 + \dfrac{(\beta^2-c-Z)\nu+\sigma(\nu-\alpha)}{\nu}\w^1+Z\dfrac{\beta\nu}{\nu^2+\sigma}\w^3,\\
\pi_3 &= d\beta -(\beta^2+\alpha\nu+c+\sigma+Z)\w^2,\\
\pi_4 &= d\alpha + \left(\beta(3\nu-\alpha)+Z\dfrac{\beta\nu}{\nu^2+\sigma}\right)\w^2
\end{align*}
and
$$Z =\dfrac{4c\beta^2(\nu^4+4(c-\sigma)\nu^2-4c\sigma+\sigma^2)}{(\nu^2+\sigma)(\nu^2+4c)(4c-\sigma)}+\dfrac{(c+\sigma)\nu^4+(4c^2+20c\sigma+\sigma^2)\nu^2+3c\sigma(\sigma-4c)}{\nu^2(\nu^2+4c)}.$$
The structure equations \eqref{case2tableau} show that
there is a 4-parameter family of 3-dimensional integral elements (satisfying the independence condition)
at every point of $U$.  However, we will only consider those integral elements that are
tangent to $V$.

When restricted to $V$, the 1-forms $\pi_1,\ldots,\pi_4$ are no longer linearly
independent.  In fact, they satisfy a homogeneous linear relation
\begin{equation}\label{locusrel}
(P \pi_1 + Q \pi_3 + R \pi_4)\restr_V = 0,
\end{equation}
where
$$P = \dfrac{\beta^2(\nu^4+(4c+2\sigma)\nu^2+\sigma^2-4c\sigma)}{\nu(\nu^2+\sigma)},\quad
Q=2\beta(\nu^2+\sigma-4c), \quad R= \dfrac{(\sigma-4c)(\nu^2+\sigma)}{\nu}.$$
(The value of $Z$ is chosen so as to make the right-hand side of \eqref{locusrel} equal
to zero.)

Because the pullbacks of the 2-forms in \eqref{case2tableau} to $\Sigma \subset V$ must
vanish, and the restrictions of the $\pi_i$ to $V$ satisfy \eqref{locusrel}, the pullbacks
of the $\pi_i$ to $\Sigma$ are determined up to multiple.
That is, there exists a function $\rho$ on $\Sigma$
such that the pullbacks of these forms to $\Sigma$ satisfy
\begin{align}
\pi_1 &= \rho \dfrac{\nu^2}{\sigma}\left(\w^3 - \dfrac{\nu^2+\sigma}{\beta\nu}\w^1\right),
\label{denument}
\\
\pi_2 &= \rho\dfrac{\nu^2}{\sigma}\w^2,
\label{dew21ment} \\
\pi_3 &= \rho (\w^1 + S \w^3),\\
\pi_4 &= \rho (S\w^1 + T \w^3),
\end{align}
where $S,T$ are determined by substitution in \eqref{locusrel}, i.e.,
$$ -\dfrac{\nu(\nu^2+\sigma)}{\sigma\beta} P + Q + RS = 0,
\quad \dfrac{\nu^2}{\sigma}+Q S + R T = 0.
$$
Note that, from now on, we will be working on $V$, taking $\alpha$ to be
given by solving \eqref{secondoption}, i.e.,
\begin{equation}\label{avalue}
\alpha = \dfrac{\beta^2\nu(\nu^2+\sigma-4c)}{(4c-\sigma)(\nu^2+\sigma)}.
\end{equation}

Differentiating both sides of \eqref{denument} and wedging with $\w^3 - \dfrac{\nu^2+\sigma}{\beta\nu}\w^1$ yields the integrability condition
\begin{multline*}[
{\nu}^{4}(
{\nu}^{4}+(2\sigma-8c){\nu}^{2}-{\sigma}^{2}) {\beta}^{2}\\
-(\sigma+{\nu}^{2})
(4c-\sigma)\left(
 (\sigma+6c){\nu}^{4}
  +(84c^2-7c\sigma-\sigma^2){\nu}^{2}
  -64c^{3}+28\sigma c^{2}-3c{\sigma}^{2}\right)]\rho=0.
\end{multline*}
Similarly, differentiating both sides of \eqref{dew21ment} and wedging with $\w^2$ yields
the integrability condition
\begin{multline*}
[4{\nu}^{2} \left( {\nu}^{6}+(4c-\sigma){\nu}^{4}
+(8c\sigma-3{\sigma}^{2}){\nu}^{2}-4{\sigma}^{2}c+{\sigma}^{3}
 \right) {\beta}^{2}
 \\
 - ( \sigma+{\nu}^{2})( 4c-\sigma)  \left(
(9\sigma+4c){\nu}^{4}
+(16{c}^{2}+92c\sigma-14{\sigma}^{2})\nu^2
 +16{\sigma}^{2}c+{\sigma}^{3} - 80\sigma{c}^{2}
\right)]\rho =0.
\end{multline*}
Thus, either $\rho=0$ on an open set in $\Sigma$, or $\nu$ is locally constant.

Suppose $\rho=0$.  Then the 1-forms $\pi_1, \ldots, \pi_4$ vanish on $\Sigma$,
and we may derive additional integrability conditions as follows.
By computing $d\pi_1$ and $d\pi_3$ modulo $\theta_0, \ldots, \theta_3$
and $\pi_1, \ldots, \pi_4$, we obtain
\begin{gather}
[{\nu}^{4}\left( 2\nu^2-4c-\sigma \right) {\beta}^{2} + \left(
\sigma+{\nu}^{2} \right)  \left( 4c-\sigma \right)
\left((\sigma+16c)\nu^2- 12{c}^{2}+3c\sigma\right)]
W=0,\label{ginger}\\[6pt]
[(4c-\sigma)(\sigma+{\nu}^{2})\left(2(\sigma+c)\nu^4+(\sigma^2+24c\sigma+8c^2)\nu^2
+3c{\sigma}^{2}-12{c}^{2}\sigma\right )
\label{fred} \\
+
{\nu}^{4}\left(\nu^4+8c\nu^2-{\sigma}^{2}-12c\sigma+16{c}^{2}\right){\beta}^{2}
] W =0,\notag
\end{gather}
where
$$W = 4{\nu}^{2} \left( {\nu}^{4}+4(c-\sigma){\nu}^{2}+{\sigma}^{2}-4c\sigma \right) {\beta}^{2}+
(4c-\sigma)(\nu^2+ \sigma)
\left(\nu^4+(4c+16\sigma)\nu^2 -12c\sigma-{\sigma}^{2} \right).$$
Thus, either the polynomial $W$ vanishes, or else both
polynomials in square brackets in \eqref{ginger},\eqref{fred} vanish.
In the latter case, taking resultants with respect to $\beta$ shows
that $\nu$ must be locally constant.  If $W$ vanishes on an open set,
then solving for $\beta$ and differentiating $W=0$ modulo
the $\theta_i$ and $\pi_j$ yields another polynomial in $\nu$ which
must vanish.  Thus, again we conclude that $\nu$ must be locally constant.

Finally we reconsider the original system $\I$ restricted to a submanifold of
$V$ on which $\nu$ is equal to a nonzero constant, and hence $d\nu=0$.
Differentiating the 1-forms of $\I$ reveals an additional integrability condition,
as follows.  We compute that
\begin{multline*}
d\left(\theta_1 + \dfrac{\nu^2+\sigma}{\beta\nu}\theta_3\right)
\& \left( \dfrac{\nu^2+\sigma}{\beta\nu}\w^1-
\dfrac{2\nu^2 + \sigma-4c}{\sigma-4c}\w^3\right)\\
= \dfrac{{\nu}^{4}\left (-2{\nu}^{2}+\sigma+4c\right ){\beta}^{2}+
(\sigma-4c)(\nu^2+\sigma)\left(\sigma{\nu}^
{2}+3c\sigma-12{c}^{2}+16c{\nu}^{2}\right)
}{\beta\nu(\sigma-4c)^2}
\,\w^1 \& \w^2 \& \w^3.
\end{multline*}
The polynomial in the numerator on the left must vanish.
Because this cannot happen if the coefficient of $\beta^2$ also vanishes,
we conclude that $\beta$ is locally constant on solutions.
It follows that $\alpha$, given by \eqref{avalue},
and $\lambda=-\sigma/\nu$ are also locally constant.
\end{proof}
We note that, by doing further computations with this exterior
differential system, one can show that no solutions with
$\beta$ and $\nu$ both constant exist.

\subsection{Non-Hopf examples with restricted shape operator}\label{coho}

We now construct an interesting class of
non-Hopf hypersurfaces $M$ in $\CP^2$ and $\CH^2$,
obtained by solving a certain underdetermined system of ordinary differential equations.
In particular, this will show the existence of non-Hopf pseudo-Ryan hypersurfaces
(see Theorem \ref{nonempty} and Corollary \ref{pseudoRexist}).

Let $M$ be a hypersurface in $\CH^n$ or $\CP^n$, with structure vector field $W$.
At each $p\in M$ we define the subspace $\H_p \subset T_pM$ as
the smallest subspace that contains $W$ and is invariant under the
shape operator $A$.  Then $M$ is Hopf if an only if
$\H_p$ is one-dimensional at each point.  In what follows, we
restrict to the case $n=2$, and consider
those hypersurfaces $M$ where $\H$ is a smooth two-dimensional distribution on $M$.
This means that we can locally construct an adapted
orthonormal frame $(W,X,\varphi X)$ with respect to which the shape operator
has the form
\begin{equation}\label{Ah2form}
A = \begin{pmatrix} \alpha & \beta & 0 \\
\beta & \lambda & 0 \\
0 & 0 & \nu \end{pmatrix},\end{equation}
and $\H$ is spanned by $W$ and $X$ at each point.  Note that $Y=\varphi X$ is
thus a principal vector.
Our next result shows that it is relatively easy to generate examples of such hypersurfaces.

\begin{theorem} \label{construction}
 Let $\alpha(t), \beta(t), \lambda(t), \nu(t)$ be analytic functions on an
open interval $I\subset \R$ satisfying the underdetermined ODE system
\begin{equation}\label{odesys}
\begin{aligned}
\alpha' &= \beta(\alpha+\lambda-3\nu), \\
\beta' &= \beta^2 + \lambda^2 - 2\lambda\nu + \alpha \nu+c, \\
\lambda' &= \left(\dfrac{(2\lambda+\nu)\beta^2 +(\nu-\lambda)(\alpha\lambda-\lambda^2 +c)}{\beta}\right),
\end{aligned}\end{equation}
with $\beta(t)$ nowhere zero.
Let $\gamma(t)$ be a unit-speed analytic framed curve in $\Mt$, defined for $t \in I$,
with transverse curvature $\nu(t)$ and zero holomorphic curvature and zero torsion.
Then there exists a non-Hopf hypersurface $M^3$ such that

(i) the distribution $\H$ is rank 2 and integrable;

(ii) $M$ has a globally defined
frame $(W,X,\varphi X)$ with respect to which the shape operator has the
form \eqref{Ah2form}, such that $\alpha, \beta, \lambda$ and $\nu$
are constant along the leaves of $\H$, and

(iii) $M$ contains $\gamma$ as a principal curve
to which the vector field $Y=\varphi X$ is tangent, and along which
the components of $A$ restrict to coincide with the
given solution of the ODE system.
\end{theorem}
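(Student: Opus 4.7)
The plan is to realize the desired hypersurface as the projection of an integral submanifold of an exterior differential system on an augmented frame bundle, in close analogy with Proposition \ref{applyCK} and the structure-equation analyses in \S\ref{movingframes}.

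Let $U \subset \R^4$ be the open subset parametrizing quadruples $(\a,\b,\l,\nu)$ with $\b \ne 0$. On $\F \times U$, let $\I$ be the Pfaffian system generated by
$$\theta_0 = \w^4, \qquad \theta_1 = \w^4_1 - \l\w^1 - \b\w^3, \qquad \theta_2 = \w^4_2 - \nu\w^2, \qquad \theta_3 = \w^4_3 - \b\w^1 - \a\w^3.$$
A three-dimensional integral submanifold of $\I$ satisfying $\w^1 \& \w^2 \& \w^3 \ne 0$ corresponds, via \eqref{omegaH} and \eqref{Hmatrix}, to an adapted unitary frame along a hypersurface whose shape operator has precisely the form \eqref{Ah2form}, with entries equal to the coordinate functions $\a,\b,\l,\nu$ on $U$.

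The central computation is to work out the $d\theta_j$ modulo the $\theta_i$, using the structure equations of $\F$ and the ambient curvature formulas, and then to require vanishing of the resulting 2-forms on a candidate integral submanifold. This analysis should force $d\a, d\b, d\l, d\nu$ and the remaining connection form $\w^2_1$ each to be a linear combination of $\w^1, \w^2, \w^3$, with several coefficients required to vanish and the others identified explicitly. I expect the $\w^1$ and $\w^3$ coefficients of $d\a, d\b, d\l$ to vanish (yielding conclusion (ii), since $\H$ is dual-annihilated by $\w^2$ in the adapted framing), while the $\w^2$ coefficients must be precisely the right-hand sides of the ODE system \eqref{odesys}. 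The zero holomorphic curvature hypothesis on $\gamma$ should correspond to the vanishing of the $\w^2$ coefficient of $\w^2_1$ along the initial lift. Adjoining all these 1-form relations as additional generators produces a prolonged Pfaffian system whose 3-dimensional integral submanifolds exist by the real-analytic Frobenius theorem, provided that the prolonged system closes under $d$.

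For the initial data, I would lift $\gamma$ to a curve $\Gamma: I \to \F \times U$ using the unitary frame $(e_1, e_2, e_3, e_4) = (-\JJ T, T, -\JJ N, N)$ along $\gamma$ (so that $e_4 = N$ becomes the outward normal and $e_3 = -\JJ N$ the structure vector $W$ of the hypersurface to be built), together with the coordinate values $(\a(t), \b(t), \l(t), \nu(t))$ supplied by the ODE solution. The Frenet equations \eqref{Frenet} with $k_0 = \tau = 0$ and $k_1 = \nu$, combined directly with the ODE system \eqref{odesys}, should verify by pullback that $\Gamma$ is an integral curve of the prolonged Pfaffian system. The Frobenius theorem then yields a unique real-analytic 3-dimensional integral submanifold $\Sigma \subset \F \times U$ containing $\Gamma$, and $M = \pi(\Sigma) \subset \Mt$ is the desired hypersurface. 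Properties (i) and (ii) follow from the structure of the prolonged system and the constancy of $\a,\b,\l,\nu$ along the leaves of $\H$; property (iii) follows from the very construction of the initial lift.

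The hard part will be the bookkeeping in the structure-equation calculation: extracting precisely the system \eqref{odesys} from the integrability conditions, and confirming that after adjoining the derived 1-form relations for $d\a, d\b, d\l, d\nu, \w^2_1$ the prolonged system does close under $d$. This parallels but is somewhat cleaner than the analogous computations in Lemmas 1 and 2 above, since we now expect a genuine family of solutions rather than a contradiction; the algebraic identities needed to verify Frobenius closure are exactly the reason the three right-hand sides in \eqref{odesys} take the particular forms they do.
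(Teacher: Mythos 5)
Your setup coincides with the paper's: the same Pfaffian system $\theta_0,\ldots,\theta_3$ on $\F\times\R^4$, the same derivation of the extra $1$-forms forcing $\w^2_1$ and $d\alpha,d\beta,d\lambda$ to be expressed in terms of $\w^1,\w^2,\w^3$ with the right-hand sides of \eqref{odesys} appearing as the $\w^2$-coefficients, and the same lift of $\gamma$ with $e_2=T$, $e_1=-\JJ T$, $e_4=N$, $e_3=-\JJ N$. The gap is in the existence step. You assert that the $\w^2$-coefficients of $d\alpha,d\beta,d\lambda,d\nu$ are all "precisely the right-hand sides of the ODE system" and that after adjoining these relations the prolonged system "closes under $d$," so that the real-analytic Frobenius theorem gives a unique integral manifold through $\Gamma$. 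But \eqref{odesys} is underdetermined: it contains no equation for $\nu'$, and indeed the structure equations do not determine the $\w^2$-component of $d\nu$ on an integral manifold. Consequently there is no explicit $1$-form generator expressing $d\nu$, and the system you would feed to Frobenius is not completely integrable; the solutions genuinely depend on one arbitrary function of one variable (the freedom of choosing $\nu$ along $\gamma$), which is incompatible with a Frobenius-type conclusion and with the uniqueness you claim.

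The paper's proof handles exactly this point differently: it introduces a new coordinate $p$ and the generator $\theta_8 = d\nu - p\,\w^2$ on $\F\times\R^5$, checks that the resulting Pfaffian system is involutive with sole nonzero Cartan character $s_1=1$, and then applies the Cartan--K\"ahler theorem (not Frobenius) to obtain an integral $3$-manifold through the non-characteristic lifted curve $\Gamma$, whose projection is $M$. So your proposal needs to be repaired at this stage: either adjoin the extra variable $p$ and verify involutivity plus non-characteristicity of $\Gamma$ as in Proposition \ref{applyCK}, or otherwise justify solvability of what is an underdetermined, not a determined (Frobenius), system. The remaining ingredients of your argument (identification of \eqref{odesys} from the integrability conditions, the role of zero holomorphic curvature as the vanishing of $\w^2_1$ along the initial lift, and the verification that $\Gamma$ is an integral curve) do match the paper's proof.
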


\begin{proof}  On $F \times \R^4$, with $\alpha, \beta, \lambda, \nu$ as
coordinates on the second factor, define the 1-forms
$$\theta_0 = \w^4, \quad
\theta_1 = \w^4_1 - \lambda \w^1 - \beta\w^3, \quad
\theta_2 = \w^4_2 - \nu \w^2, \quad
\theta_3 = \w^4_3 - \beta \w^1 - \alpha \w^3,
$$
where the $\w^i$ and $\w^i_j$ are pulled back to the product manifold
via projection to the first factor.
(We will restrict to the open subset of $\F \times \R^4$ where $\beta \ne 0$.)
Then a non-Hopf hypersurface $M$ equipped with an orthonormal frame with respect
to which the shape operator has the form \eqref{Ah2form} can be lifted to a three-dimensional integral manifold $f(M)$
of these 1-forms, by letting $e_1 = X$, $e_2 = Y$, $e_3=W$, $e_4=\xi$,
and letting the coordinates $\alpha, \beta, \lambda, \nu$ take the values
of the corresponding components of $A$.  (Note that this integral manifold
also satisfies the usual independence condition $\w^1 \& \w^2 \& \w^3 \ne 0$.)
In what follows, we will derive necessary conditions that this integral manifold
must satisfy, if $M$ is to satisfy the conditions (i) and (ii) of the theorem.

If $\H$ is integrable then $f^*(d\w^2 \& \w^2)=0$.  We compute
$$d\w^2 \& \w^2 \equiv (-\w^2_1+\lambda \w^3) \& \w^1 \& \w^2 \tmod \theta_0, \theta_1, \theta_2, \theta_3.$$
If $\nu$ is constant along the integral surfaces of $\H$, then $f^*(d\nu \& \w^2)=0$.
We compute
$$d\theta_2 \equiv (\nu-\lambda) \w^2_1 \& \w^1 -\beta \w^2_1 \& \w^3 +(\beta^2 -\lambda(\alpha-\nu)-c) \w^1 \& \w^3 \tmod \theta_0, \ldots, \theta_3, d\nu \& \w^2.$$
So, the last two conditions imply that $f(M)$ is also an integral of the 1-form
$$\theta_4 = \w^2_1 - \lambda \w^3 - \left( \dfrac{\beta^2+\lambda^2 -\alpha \lambda-c}{\beta} \right)\w^1,$$
Now we compute
$$d\theta_3 \equiv \w^1 \& (d \beta - (\beta^2 + \lambda^2 - 2\lambda\nu + \alpha \nu+c)\w^2) +
\w^3 \& (d \alpha - \beta(\alpha+\lambda-3\nu)\w^2)\tmod \theta_0, \ldots, \theta_4.$$
Thus, the condition that $\alpha, \beta$ have nonzero derivatives only in the
$Y$-direction implies that $f(M)$ is also an integral of the 1-forms
$$\theta_5 = d \alpha - \beta(\alpha+\lambda-3\nu)\w^2,\qquad
\theta_6 = d \beta - (\beta^2 + \lambda^2 - 2\lambda\nu + \alpha \nu+c)\w^2.$$
Similarly, computing $d\theta_1$ modulo $\theta_0, \ldots, \theta_6$, and using
the condition that $\lambda$ has a nonzero derivative only in the $Y$-direction shows that
$f(M)$ is also an integral of
$$\theta_7 = d \lambda - \left(\dfrac{(2\lambda+\nu)\beta^2 +(\nu-\lambda)(\alpha\lambda-\lambda^2 +c)}{\beta}\right)\w^2.$$

In order to encode the condition that $f^*(d\nu \& \w^2)=0$, we introduce a new coordinate
$p$ and define the 1-form
$$\theta_8 = d \nu - p\w^2.$$
This, and the previous 1-forms $\theta_i$,
are taken to be defined on the open set in $F \times \R^5$ where $\beta\ne 0$.
The framed hypersurfaces satisfying the conditions in the theorem are in one-to-one
correspondence with integral manifolds (satisfying the independence condition) of the Pfaffian system
$\I$ defined by $\theta_0, \ldots, \theta_8$.

It is now easy to verify that this exterior differential system is involutive, with its only nonzero
Cartan character being $s_1=1$.  Moreover, the Cartan-Kahler Theorem implies that
integral manifolds exists that pass through any non-characteristic 1-dimensional
integral manifold of $\I$.  In particular, any integral curve $\Gamma$ along which
$\w^1=\w^3=0$ but $\w^2\ne 0$ is non-characteristic.  We will now show how
such a curve corresponds exactly to a curve $\gamma$ in $\Mt$ satisfying
the conditions in Theorem \ref{construction}.

Given $\gamma$, equipped with a unitary frame satisfying
the Frenet equations \eqref{Frenet}, we construct a lift $\widehat\gamma$ into $\F$ by
setting $e_2=T$, $e_1=-\JJ T$, $e_4=N$, $e_3 = -\JJ N$.  It follows that $\w^1, \w^3, \w^4$ and $\w^4_1=-\w^3_2$ pull back
to be zero along $\widehat\gamma$, and $\w^2_1$, $\w^4_2$ and $\w^3_4$
pull back to be multiples of $\w^2$ that respectively
are the holomorphic curvature, transverse curvature and torsion of $\gamma$.
Thus, if $\gamma$ has zero holomorphic curvature then $\widehat\gamma$ is
an integral curve of $\w^2_1$.  We further lift the curve into
$\F \times \R^5$ by setting $\alpha, \beta,\lambda,\nu$ equal
to the values given by the solution to the ODE system,
and $p$ equal to $d\nu/dt$.  Then it is easy to check that
the lifted curve $\Gamma$ is an integral curve of $\theta_0, \ldots, \theta_8$.
\end{proof}

\begin{corollary}\label{pseudoRexist}
Let $\alpha(t),\beta(t),\lambda(t),\nu(t)$ be analytic solutions defined for $t\in I$ of the
system \eqref{odesys}, such that $\beta$ is nowhere zero and
$$\beta^2 \nu^2 + (4c+\lambda \nu)(\alpha(\lambda-\nu)-\beta^2)=0.$$
Then the hypersurface $M$ constructed by the previous theorem is
a non-Hopf pseudo-Ryan hypersurface.
\end{corollary}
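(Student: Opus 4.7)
The plan is to apply Proposition \ref{pseudo-R-refined} to the hypersurface $M$ produced by Theorem \ref{construction}. That proposition says that, in a neighborhood of a point where $AW\neq \alpha W$, the pseudo-Ryan condition is equivalent to
$$\mu = 0 \qquad \text{and} \qquad \beta^2\nu^2 = -(4c+\lambda\nu)\bigl(\alpha(\lambda-\nu)-\beta^2\bigr).$$
So the proof reduces to three verifications: that $M$ is non-Hopf, that $\mu\equiv 0$ on $M$, and that the algebraic relation above holds on all of $M$.

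The first two verifications are immediate from the construction. Since the globally defined frame $(W,X,\varphi X)$ of Theorem \ref{construction} realizes the shape operator in the form \eqref{Ah2form}, the entry $\mu$ (the $(2,3)$-entry in \eqref{shapematrix}) vanishes identically on $M$. Similarly, because $\beta(t)$ is nowhere zero along $\gamma$ and Theorem \ref{construction} extends $\beta$ to a component of $A$ that remains equal to this value along each leaf of $\H$, we have $AW = \alpha W + \beta X$ with $\beta\neq 0$ at every point, so $M$ is non-Hopf.

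The substantive step is propagating the algebraic identity
$$P(\alpha,\beta,\lambda,\nu) := \beta^2\nu^2 + (4c+\lambda\nu)\bigl(\alpha(\lambda-\nu)-\beta^2\bigr)$$
from the curve $\gamma$ to all of $M$. By hypothesis, $P=0$ along $\gamma$, since this equation holds for the given analytic solution of \eqref{odesys}. By part (ii) of Theorem \ref{construction}, the four components $\alpha,\beta,\lambda,\nu$ of the shape operator are constant along the leaves of the distribution $\H$, hence $P$ is itself constant along these leaves. Because $\gamma$ is tangent to $Y=\varphi X$, which spans $\H^\perp$, each leaf of $\H$ near $\gamma$ meets $\gamma$ in exactly one point, and so $P=0$ everywhere on $M$.

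I don't anticipate a significant obstacle: the only point worth checking carefully is the leafwise constancy of $P$, which is purely a consequence of Theorem \ref{construction}(ii). One could optionally verify by direct computation that the locus $\{P=0\}$ in $(\alpha,\beta,\lambda,\nu)$-space is invariant under the flow of the underdetermined system \eqref{odesys} (so that solutions with $P\equiv 0$ constitute a genuine subfamily rather than being degenerate), but this is not needed for the proof itself, which simply invokes Proposition \ref{pseudo-R-refined} once the three bullets above are in hand.
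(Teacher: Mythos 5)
Your proposal is correct and follows exactly the route the paper intends (the paper leaves the corollary's proof implicit): combine Theorem \ref{construction} with the pseudo-Ryan criterion of Proposition \ref{pseudo-R-refined}, noting $\mu=0$ and $\beta\ne0$ from the construction and propagating the algebraic relation from $\gamma$ to $M$ via the constancy of $\alpha,\beta,\lambda,\nu$ along the leaves of $\H$ (or, equivalently, by real-analyticity of the constructed hypersurface). No gaps worth flagging.
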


Similarly, we can use the above theorem, together with
solutions to the ODE systems, to construct non-Hopf hypersurfaces
satisfying $\mu=0$ and any given algebraic condition involving $\alpha, \beta, \lambda$ and $\nu$.

\subsection{Non-Hopf hypersurfaces with constant principal curvatures}\label{constantpc}

Theorem \ref{construction} provides a new construction for the
non-Hopf hypersufaces in $\CH^2$ with constant principal curvatures.
These have been classified by Berndt and Diaz-Ramos \cite {BerndtDiaz},
who showed that such hypersurfaces must be open subsets of
homogeneous hypersurfaces.  Thus, they belong to a 1-parameter family of orbits under the action of a certain 3-dimensional group
of isometries of $\CH^2$.
One member of the family is a minimal hypersurface and the others are its equidistant
hypersurfaces.

In Theorem \ref{construction}, we take $\nu$ to be any constant in the range
$-1/r < \nu < 1/r$ and solve \eqref{odesys} for a constant solution.
(In fact, a constant solution is possible only when $\nu$ lies in
this range.)  The shape operator can be written with respect to the frame $(W, X, \varphi X)$, used in \S\ref{RicciConditions}, as

\begin{equation}
A = \frac{1}{r}
\begin{pmatrix} 3 u - u^3& v& 0 \\
                 v &u^3 & 0 \\
                            0&0&u
\end{pmatrix}
\end{equation}
where $u = r \nu$ and $v = (1- u^2)^\frac{3}{2}.$  The principal curvatures are $\nu$ and

$$ \frac{3}{2}\ \nu \pm \frac{1}{r}\ \sqrt{ 1 - \frac{3}{4}\ r^2 \nu^2}.$$
\noindent
On setting $r = 2$, we see that our result is consistent with Proposition 3.5 of \cite{BerndtDiaz}.

\end{document}